\newtheorem{thm}{Theorem}[section]
\newtheorem{thmINTRO}{Theorem}
\newtheorem{prop}[thm]{Proposition}
\newtheorem{lemma}[thm]{Lemma}
\newtheorem{rmk}[thm]{Remark}
\newtheorem{example}[thm]{Example}
\newcommand{\rk}{\mathrm{rk}\,}
\newcommand{\pic}{\mathrm{NS}(X)}
\newcommand{\Tx}{\mathrm{T}_X}
\newcommand{\LL}{\mathbb{L}_{20}}
\newcommand{\Z}{\mathbb{Z}}
\newcommand{\quotient}[2]{{\raisebox{.2em}{$#1$}\left/\raisebox{-.2em}{$#2$}\right.}}
\renewcommand{\tilde}{\overset{\sim}}
\title{K3 surfaces with action of the group $M_{20}$}
\author{Paola Comparin, Romain Demelle}
\keywords{K3 surfaces, symplectic action, Mathieu group $M_{20}$}
\subjclass[2010]{Primary 14J28; Secondary: 14J50}
\begin{document}

\maketitle

\begin{abstract}
It was shown by Mukai that the maximum order of a finite group acting faithfully and symplectically on a K3 surface is 960 and if such a group has order 960, then it is isomorphic to the Mathieu group $M_{20}$. In this paper, we are interested in projective K3 surfaces admitting a faithful symplectic action of the group $M_{20}$. We show that there are infinitely many K3 surfaces with this action and we describe them and their projective models, giving some explicit examples.
\end{abstract}

\section*{Introduction}
A {\it K3 surface} $X$ is a compact complex smooth surface which is simply connected and admits a nowhere vanishing holomorphic 2-form $\omega_X$, unique up to scalar multiplication. 
The study of finite groups acting on K3 surfaces and their action in cohomology goes back to the first works by Nikulin and Mukai. In particular, given a finite group $G$ acting on a K3 surface $X$, one can study the action of $G$ according to the action on the 2-form $\omega_X$: an automorphism $\sigma\in \mathrm{Aut}(X)$ is said to be {\it symplectic} if it acts as the identity on $\omega_X$ and {\it non-symplectic} otherwise. In this last case $\sigma^*$ acts as multiplication by some root of unity $\zeta_m\in\mathbb C^{\times}$.
Given a finite group of automorphisms $G$, let $\alpha:G\rightarrow\mathbb C^{\times}$ corresponding to the action on $\omega_X$. The following short exact sequence holds for some positive integer $m$:

$$\xymatrix{1\ar[r]& G_0\ar[r] & G\ar[r]^-{\alpha}&\mathbb Z/m\mathbb Z\ar[r] & 1}$$

where $G_0=\ker \alpha$ corresponds to the automorphisms acting symplectically on $X$.
A first result in the classification of possible groups $G$ acting on a K3 surface is given by Nikulin in \cite{Nikulin}, where the author classified abelian finite groups acting symplectically and showed  that there is only a finite number of them. 

Without the assumption of being abelian, Mukai in \cite{Mukai} studied finite groups acting faithfully and symplectically on K3 surfaces and showed that every such group is a subgroup of the Mathieu group $M_{23}$. Xiao in \cite{xiao} gave an explicit list of all possible finite groups acting symplectically, finding 81, and classified combinatorial types of the action of these groups, i.e. computed the number of fixed points in each case.

Mukai had also shown that the order of such a group is at most 960, being equal to 960 if and only if $G$ is the Mathieu group $M_{20}$, that one can describe also as $A_5\ltimes(\mathbb{Z}/2\mathbb{Z})^4$.

In this paper we are concerned with projective K3 surfaces admitting a faithful and symplectic action of $M_{20}$. We first remark that a classification of possibilities for maximal groups $\Gamma$ acting on a K3 surface $X$ and such that $\Gamma$ contains $M_{20}$ strictly is given in \cite{BS} and \cite{BH}. 
There are three such groups and their order is $3840=4\cdot |M_{20}|$ in one case and $1920=2\cdot |M_{20}|$ in the two other cases. The K3 surfaces where these groups act first appeared in \cite{BS,BH,Kondo,Mukai}.

Let $X$ be a K3 surface that admits a faithful and symplectic action of the Mathieu group $M_{20}$. The surface $X$ has necessarily Picard number $\rho(X):=\rk\pic$ equal to 20 by Xiao's classification \cite{xiao}. 
A first question arising is how many K3 surfaces admit a symplectic and faithful action of $M_{20}$. By \cite[Proposition 2.1]{BS} it is already known that there are at most countably many such surfaces. In Section \ref{infinity} we prove the following further result.

\begin{thmINTRO}\label{mainTH1}
There exists an infinite number of K3 surfaces admitting a faithful and symplectic action of $M_{20}$.
\end{thmINTRO}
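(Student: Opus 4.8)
The plan is to treat the problem lattice-theoretically, combining Nikulin's description of symplectic group actions with the surjectivity of the period map and the strong Torelli theorem; the infinitely many surfaces will appear as the rank-$20$ members attached to a single fixed invariant lattice of $M_{20}$.

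First I would fix the lattice data. Write $\Lambda = H^{2}(X,\mathbb Z)\cong U^{3}\oplus E_8(-1)^{2}$ for the K3 lattice. For a symplectic $M_{20}$-action the coinvariant lattice $\Omega := (\Lambda^{M_{20}})^{\perp}$ is a fixed even negative definite lattice, and since $M_{20}$ is the maximal symplectic group its rank is the largest possible, namely $19$ (the coinvariant lattice lies in $\mathrm{NS}(X)$, which carries a single positive direction). Consequently the invariant lattice $L:=\Lambda^{M_{20}}=\Omega^{\perp}$ is even \emph{positive definite} of rank $3$, and $M_{20}$ acts on $\Lambda$ trivially on $L$ and by its faithful (Mukai) representation on $\Omega$. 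This already accounts for the constraint $\rho(X)=20$: one has $\Omega\subseteq\mathrm{NS}(X)$ of rank $19$, and one further invariant ample class fills $\mathrm{NS}(X)$ up to rank $20$.

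Next I would build the family by varying the polarization inside $L$. For each primitive vector $h\in L$ with $h^{2}>0$ put $T:=h^{\perp}\cap L$, an even positive definite lattice of rank $2$. Let $\omega$ be an isotropic vector of $T\otimes\mathbb C$ with $\omega\bar\omega>0$ (it exists and is unique up to scale and conjugation, as $T$ has signature $(2,0)$), so that $\operatorname{Re}\omega,\operatorname{Im}\omega$ span $T\otimes\mathbb R$. By surjectivity of the period map there is a K3 surface $X_h$ with a marking $H^{2}(X_h,\mathbb Z)\xrightarrow{\sim}\Lambda$ sending $\omega_{X_h}$ to $\omega$; then $T_{X_h}=T$ and $\mathrm{NS}(X_h)=T^{\perp}_{\Lambda}=\Omega\oplus'\langle h\rangle$, so $\rho(X_h)=20$ and, after choosing the correct chamber, $h$ becomes a polarization, making $X_h$ projective. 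Pulling the $M_{20}$-action on $\Lambda$ back through the marking gives a faithful group of Hodge isometries of $H^{2}(X_h,\mathbb Z)$ fixing $\omega$ (since $\omega\in T\otimes\mathbb C\subseteq L\otimes\mathbb C$ and $M_{20}$ acts trivially on $L$); in particular the action is symplectic. By Nikulin's theory of symplectic actions together with the strong Torelli theorem — adjusting the marking by a Weyl element so that the ample cone becomes $M_{20}$-invariant, which is possible because $\Omega$ is negative definite — this group is induced by genuine symplectic automorphisms, faithful because the action on $\Omega$ is. Hence each $X_h$ carries a faithful symplectic action of $M_{20}$.

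Finally, for the count: since $L$ is positive definite of rank $3$ it contains primitive vectors of unbounded norm $h^{2}$, with only finitely many of each norm up to $\operatorname{Aut}(L)$, while a fixed rank-$2$ lattice admits only finitely many primitive embeddings into $L$; therefore the orthogonal complements $h^{\perp}\cap L$ realize infinitely many isometry classes of rank-$2$ lattices. As non-isomorphic transcendental lattices give non-isomorphic K3 surfaces and a fixed $T$ is realized by only finitely many surfaces, the family $\{X_h\}$ contains infinitely many isomorphism classes, proving the theorem. The step I expect to be the main obstacle is the geometric realization: checking uniformly, for every $h$, that the abstract $M_{20}$-action by Hodge isometries preserves an ample cone and so descends to automorphisms (the Weyl-chamber argument behind Nikulin's criterion), together with the bookkeeping ensuring that the chosen $h$ indeed lies in the ample chamber of $X_h$.
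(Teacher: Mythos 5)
Your argument is correct in outline but takes a genuinely different route from the paper. For the infinitude, the paper works concretely inside $\mathbb{L}_{20}$: it uses the identity $L^2=(2\lambda-\delta)^2+(2\mu-\delta)^2+10\delta^2$ to reduce the existence of an invariant class with $L^2=4p$ to writing $p=\lambda^2+\mu^2$ (taking $\delta=0$), which Fermat's two-square theorem solves for $p\equiv 1 \pmod 4$, and Dirichlet's theorem then supplies infinitely many such primes; distinctness of the resulting surfaces is deduced from the primality of $p$, and the geometric realization is not redone there but carried by the earlier sections together with the Shioda--Inose classification of singular K3 surfaces. You instead obtain infinitude by a soft counting argument --- a positive definite rank-$3$ lattice contains infinitely many primitive vectors $h$, while any fixed rank-$2$ isometry class occurs as an orthogonal complement $h^{\perp}$ only finitely often, so infinitely many transcendental lattices arise --- and you rebuild the realization from the surjectivity of the period map and the strong Torelli theorem. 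Your counting is arguably cleaner and more self-contained than the paper's ``since $p$ is prime the surface is new''; the paper's version buys explicit polarization degrees $L^2=4p$ and ties into its Theorem~2 on which $n$ occur. One point you should repair: you justify the Weyl-chamber step (making the $M_{20}$-action preserve the ample cone) ``because $\Omega$ is negative definite,'' but negative definiteness is not the relevant property --- $E_8(-1)$ is negative definite and contains many $(-2)$-vectors. What is actually needed, and what the paper uses to prove ampleness of $L$, is Nikulin's result that the coinvariant lattice of a symplectic action contains no $(-2)$-vectors; this is precisely what guarantees that the invariant class $h$ meets no wall, so that averaging an ample class lands in the interior of a chamber and the action descends to automorphisms.
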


Let $\mathbb L_{20}$ be the lattice defined by the following matrix:
$$\LL=\left(\begin{array}{ccc} 4 & 0 & -2 \\ 0 & 4 & -2 \\ -2 & -2 & 12 \end{array}\right).$$

We show in Section \ref{sec-background} some properties of this lattice and its link with the invariant lattice $\mathrm{H}^2(X,\mathbb{Z})^{M_{20}}$ and the transcendental lattice $T_X=\pic^{\perp_{\mathrm{H}^2(X,\mathbb Z)}}$.
We consider $L$ a primitive element of $\LL$ invariant by the action of $M_{20}$. As the square of all element in $\LL$ is a multiple of 4, we automatically get $L^2=4n$, for some $n\in\mathbb{Z}_{\geq1}$. The second main result of the paper is the following:

\begin{thmINTRO}\label{mainTH2}
There exists an embedding $\langle 4n\rangle\hookrightarrow\LL$ if and only if $n$ is not of the form $4^i(16j+6)$, with $i,j$ some non-negative integers. If it is the case, we can construct a projective model of the surface $X$ in $\mathbb{P}^{2n+1}$ and we have the following results:
\begin{enumerate}[1.]
    \item $L$ is an ample class;
    \item the linear system $\vert L\vert$ is not hyperelliptic and is base point free;
    \item for $n>1$, the projective model is only defined by quadrics.
\end{enumerate}
Morevover, in this last case, the number of quadrics $Q_{4n}$ defining $X$ is
$$Q_{4n}=2n^2-3n+1.$$
\end{thmINTRO}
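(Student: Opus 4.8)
The plan is to separate the statement into an arithmetic part (existence of the embedding) and a geometric part (the properties of the model and the count of quadrics).

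\textbf{The embedding.} Every vector $v=(x,y,z)\in\LL$ satisfies $v^2=4(x^2+y^2+3z^2-xz-yz)$, so an embedding $\langle 4n\rangle\hookrightarrow\LL$ exists if and only if the positive definite ternary form $f(x,y,z)=x^2+y^2+3z^2-xz-yz$ represents $n$. First I would record that the symmetric matrix of $f$ has determinant $5/2$, consistent with $\det\LL=4^3\cdot\tfrac52=160$, and then determine the genus of $f$ and its local representation conditions prime by prime. Over $\mathbb R$ the form is positive definite; at every odd prime $p\neq 5$ it is $p$-adically unimodular and represents every $p$-adic integer; a short separate check handles $p=5$. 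The core of the argument is the $2$-adic analysis, which I expect to yield exactly the excluded set $n=4^i(16j+6)$, in direct analogy with the three-squares theorem. To upgrade representation by the genus to representation by $f$ itself, I would verify that $f$ is regular (equivalently, that it is alone in its spinor genus), most cleanly by comparison with the classification of regular ternary forms; the stated characterisation then follows.

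\textbf{Ampleness and the target space.} For the geometry I use the results of Section \ref{sec-background}: $\LL\cong\mathrm H^2(X,\mathbb Z)^{M_{20}}$ is positive definite of rank $3$, it contains $\Tx$, and the invariant part of $\pic$ is the rank-one lattice generated by $L$. Ampleness of $L$ is then immediate: since $M_{20}$ acts by automorphisms it preserves the ample cone, so averaging any ample class over the group produces an $M_{20}$-invariant ample class; as the invariant part of $\pic\otimes\mathbb R$ is the line $\mathbb R L$, this class is a positive multiple of $L$, which proves (1). Riemann--Roch on $X$ gives $h^0(L)=2+\tfrac12 L^2=2+2n$ and $h^1(L)=h^2(L)=0$, so $\varphi_{|L|}$ maps $X$ into $\mathbb P^{2n+1}$.

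\textbf{Base points, hyperellipticity and quadrics.} For (2) I would invoke Saint-Donat's criteria: for $L$ ample with $L^2>0$, the system $|L|$ acquires a base point only in the presence of a class $E$ with $E^2=0$ and $L\cdot E=1$, and is hyperelliptic only if such an $E$ has $L\cdot E=2$ or $L=2B$ with $B^2=2$. I would exclude all of these from the lattice: writing $E=\tfrac{L\cdot E}{L^2}L+E^{-}$ with $E^{-}$ in the negative definite coinvariant lattice, or passing to the invariant orbit sum $\sum_{g}g^*E\in\mathbb Z L$, forces a numerical relation between $n$, $L\cdot E$ and the square of a coinvariant vector that has no solution (here one uses that the coinvariant lattice of $M_{20}$ contains no $(-2)$-vector). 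This simultaneously yields very ampleness, so $X\subset\mathbb P^{2n+1}$ is a closed embedding. In the range $L^2\ge 8$, i.e.\ $n\ge 2$, Saint-Donat's theorem then guarantees that the projectively normal K3 surface of degree $4n$ is cut out by quadrics (one must again discard the trigonal and remaining exceptional configurations by the same lattice arguments), which is (3); the quartic case $n=1$ is treated directly and corresponds to $Q_4=0$. Finally the number of quadrics is the numerical difference $Q_{4n}=h^0(\mathbb P^{2n+1},\mathcal O(2))-h^0(X,2L)$, which is exact because projective normality makes the restriction map surjective. Since $h^0(\mathbb P^{2n+1},\mathcal O(2))=\binom{2n+3}{2}=(2n+3)(n+1)$ and $h^0(X,2L)=2+\tfrac12(2L)^2=2+8n$, I get $Q_{4n}=(2n+3)(n+1)-(2+8n)=2n^2-3n+1$, as claimed; the value $Q_8=3$ recovers the familiar description of a degree-$8$ K3 surface as a complete intersection of three quadrics.

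\textbf{Main obstacle.} I expect the genuine difficulty to lie in the geometric exclusions underlying (2)--(3): proving that no isotropic class $E$ with $L\cdot E\in\{1,2\}$ and none of Saint-Donat's exceptional configurations occur. Each such possibility must be translated into the existence of a lattice vector with prescribed intersection numbers and then contradicted using the positive-definiteness of $\LL$ together with the fine structure of the coinvariant lattice of $M_{20}$; by comparison the $2$-adic computation of the first part, though delicate, is essentially classical.
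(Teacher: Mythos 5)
Your overall architecture (arithmetic existence, ampleness, Saint-Donat's criteria, dimension count for the quadrics) matches the paper's, and two pieces are correct and even cleaner than the paper's: the averaging argument for ampleness, and the count $Q_{4n}=\binom{2n+3}{2}-(2+8n)=2n^2-3n+1$ via projective normality. The first genuine gap is in the arithmetic part. You correctly reduce existence of the embedding to representability of $n$ by $f(x,y,z)=x^2+y^2+3z^2-xz-yz$, but you then propose to conclude from a local (genus) computation plus the assertion that $f$ is \emph{regular}, ``equivalently alone in its spinor genus''. That equivalence is not a theorem, and regularity is precisely the hard content here: completing the square gives $4f=(2x-z)^2+(2y-z)^2+10z^2$, so $f$ is a congruence sublattice of Ramanujan's form $x^2+y^2+10z^2$, the standard example of a \emph{non}-regular ternary form (it misses finitely many locally represented odd integers; cf.\ Ono--Soundararajan). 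A genus computation only yields representation by the genus, not by $f$ itself. The paper closes this by quoting the classical fact that the \emph{even} integers represented by $x^2+y^2+10z^2$ are exactly those not of the form $4^i(16j+6)$, and adding an elementary parity argument showing that every representation of $4n$ by $x^2+y^2+10z^2$ descends to a vector of $\LL$. Your plan needs either this reduction or an independent proof of regularity of $f$ (e.g.\ by locating it in the Jagy--Kaplansky--Schiemann classification, which you mention but do not carry out); as written the step from local to global representation is unjustified.

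The second gap is in the exclusions underlying points 2 and 3, which you rightly flag as the main obstacle but for which you propose an insufficient mechanism. Positive-definiteness of $\LL$ and the absence of $(-2)$-vectors in the coinvariant lattice are what give ampleness; they do \emph{not} rule out the Saint-Donat configurations. In the paper's argument, writing $E=\frac{\alpha L+v}{\eta}$ with $v\in\LL^{\perp}$, the condition $L\cdot E=k$ forces $\eta=4n\alpha/k$, and the real obstruction is that $\eta^2$ must divide $\left[\mathrm{NS}(X):\mathbb{Z}L\oplus\LL^{\perp}\right]^2=\frac{160n}{4ac-b^2}$, where $\Tx=\begin{pmatrix}4a&2b\\2b&4c\end{pmatrix}$. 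This only bounds $n$ to the divisors of $10$, $40$ and $90$ in the three situations, after which one must check the surviving cases one by one against the explicitly computed transcendental lattices (the paper's Appendix A table); for instance $n=5$ is killed only because $4ac-b^2=200$ there, and the cases $L^2\in\{16,32,80,160\}$ need a separate argument via non-primitivity ($L=2M$). So the exclusion is not a uniform lattice computation: it genuinely requires the case-by-case determination of $\Tx$ for each embedding, which your sketch neither performs nor identifies as the needed input. (A minor point you handle better than the statement itself: for $n=1$ the model is a quartic in $\mathbb{P}^3$, consistent with $Q_4=0$, so claim 3 must implicitly exclude that case.)
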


The paper is organized as follows: in Section \ref{sec-background}, we recall some notations and results we use throughout the paper. In order to illustrate Theorem \ref{mainTH2}, we start with some particular cases. The cases $L^2=4n$ with $n=1,2,10$ have been already studied in \cite{BS,Kondo, BH}, thus in Section \ref{L^2=12} we begin by studying the first new case which is $n=3$, that is $L^2=12$. We prove the existence of the embedding and we construct explicitly the projective model. In Section \ref{other_cases} we cover the cases $L^2=4n$ with $4\leq n\leq 10$: we show the cases we cannot construct the embedding for and see possible behaviours such as the existence of different K3 surfaces for the same $n$ or multiple embeddings on a same K3
surface. 
All the results we obtain are summarized in Appendix \ref{tab:T_X}. 
Section \ref{projective_model} is dedicated to the proof of Theorem \ref{mainTH2}. Thanks to the results in \cite{SD} we show that the linear system $\vert L\vert$ is base point free, is not hyperelliptic and the projective model is defined only by quadrics.
In Section \ref{prim} we are interested in some non-primitive embeddings, where the non-primitiveness allows to find explicit equations for the surfaces. Finally, in Section \ref{infinity} we prove Theorem \ref{mainTH1}.
 
\section*{Acknowledgments}
We would like to thank C\'edric Bonnaf\'e, Xavier Roulleau and Alessandra Sarti for very useful insights and discussions. We thanks the anonymous referee for useful comments which allows to simplify the proofs in Section 4.
The authors have been partially supported by Fondecyt Iniciaci\'on en la investigaci\'on N.11190428, Programa de cooperaci\'on cient\'ifica ECOS-ANID C19E06 and Math AmSud-ANID 21 Math 02. The first author has been partially supported by Fondecyt Regular N.1200608.

\section{Notations and preliminaries}\label{sec-background}

Let $X$ be a {\it K3 surface} and denote by $\omega_X$ the nowhere vanishing holomorphic 2-form. We have $\mathrm{H}^{2,0}(X):=\mathrm{H}^0(X,\Omega_X^2)=\mathbb{C}\cdot\omega_X$ and the cohomology group $\mathrm{H}^2(X,\mathbb{Z})$ is isometric to the {\it K3 lattice} $\Lambda_{K3}$:
$$\Lambda_{K3}:=U^{\oplus 3}\oplus E_8(-1)^{\oplus 2}$$
where $U$ is the hyperbolic plane and $E_8(-1)$ is the negative definite lattice associated to the root system with the same name. 

Let $G_0$ be a group of {\it symplectic automorphisms} of $X$, that is the automorphisms acting as the identity on the 2-form $\omega_X$. We are interested in the case where $G_0$ is the Mathieu group $M_{20}$. In this paper, we want to describe all projective K3 surfaces admitting a symplectic and faithful action of $M_{20}$.

The lattice $\LL$ we defined in the Introduction has rank $3$ and signature $(3,0)$. Denoting by $\mathrm{H}^2(X,\mathbb{Z})^{M_{20}}$ the invariant lattice by the action of $M_{20}$ on the K3 lattice, we recall the following result:  

\begin{prop}{(\cite[proof of Proposition 2.1]{Kondo})}
Let $X$ be a K3 surface with a faithful symplectic action by $M_{20}$. Then the invariant lattice $\mathrm{H}^2(X,\mathbb{Z})^{M_{20}}$ is isometric to $\LL$.
\end{prop}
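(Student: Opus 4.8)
The plan is to compute the invariant lattice $\mathrm{H}^2(X,\mathbb{Z})^{M_{20}}$ using the general theory of symplectic group actions on K3 surfaces, following the framework established by Nikulin and refined by Mukai. The key principle is that for a symplectic action, the group $M_{20}$ acts trivially on the transcendental part and on $\mathrm{H}^{2,0}$, so the invariant lattice $\mathrm{H}^2(X,\mathbb{Z})^{M_{20}}$ contains the transcendental lattice $T_X$ and sits inside the Picard lattice. Since $\rho(X)=20$ by Xiao's classification and the total rank of $\mathrm{H}^2(X,\mathbb{Z})$ is $22$, the orthogonal complement $\Omega_{M_{20}}:=(\mathrm{H}^2(X,\mathbb{Z})^{M_{20}})^{\perp}$ is the co-invariant lattice, which is negative definite of rank $19$. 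This forces the invariant lattice to have rank $3$ and signature $(3,0)$, matching $\mathbb{L}_{20}$.

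First I would recall or establish the structure of the co-invariant lattice $\Omega_{M_{20}}$. The strategy is that this lattice depends only on the abstract group $M_{20}$ together with its action on $\mathrm{H}^2(X,\mathbb{Z})$, and not on the particular surface $X$; this is a consequence of the rigidity of symplectic actions (the isometry class of the co-invariant lattice is a group-theoretic invariant, as in Nikulin's work). Concretely, I would determine $\Omega_{M_{20}}$ as the orthogonal complement of a known embedding, or extract it from the character-theoretic data: the representation of $M_{20}$ on $\mathrm{H}^2(X,\mathbb{Z})\otimes\mathbb{Q}$ decomposes into the trivial representation with multiplicity $3$ and a $19$-dimensional complement, and the fixed part has rank exactly $3$.

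Next I would pin down the precise isometry class of the rank-$3$ invariant lattice by computing its discriminant form. The co-invariant lattice $\Omega_{M_{20}}$ is primitively embedded in $\Lambda_{K3}$, and by Nikulin's theory of discriminant forms its orthogonal complement $\mathrm{H}^2(X,\mathbb{Z})^{M_{20}}$ has discriminant form equal to the opposite of that of $\Omega_{M_{20}}$. Computing $q_{\Omega_{M_{20}}}$ (or equivalently the discriminant group and its quadratic form) then determines the genus of the invariant lattice. Finally, since the invariant lattice has rank $3$, signature $(3,0)$, and a prescribed discriminant form, I would invoke the uniqueness results for indefinite or small-rank lattices in a given genus to conclude that it is isometric to $\mathbb{L}_{20}$; a direct check that the Gram matrix $\mathbb{L}_{20}$ realizes this discriminant form completes the identification.

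The main obstacle I anticipate is the explicit determination of the co-invariant lattice $\Omega_{M_{20}}$ and the verification that its orthogonal complement has exactly the discriminant form realized by $\mathbb{L}_{20}$. Since the authors cite this as coming from the proof of Proposition 2.1 in \cite{Kondo}, the heavy lifting — namely the computation of the $M_{20}$-action on the K3 lattice and the resulting invariant sublattice — has presumably already been carried out there; the role of this proposition is to record that the resulting lattice is precisely the one with Gram matrix $\mathbb{L}_{20}$. Thus rather than reproving the lattice computation from scratch, I would structure the argument around citing Kond\={o}'s determination of the action and then carrying out the final genus-and-uniqueness step to match it with the explicitly given matrix.
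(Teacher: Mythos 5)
The paper does not actually prove this proposition: it is quoted verbatim from the proof of Proposition 2.1 in Kond\={o}'s paper, so your instinct to defer the heavy lifting to that reference matches what the authors do. Your surrounding framework (symplectic action fixes $\omega_X$, hence $\Tx\subset\mathrm{H}^2(X,\mathbb{Z})^{M_{20}}$; the co-invariant lattice $\Omega_{M_{20}}$ is negative definite of rank $19$, so the invariant lattice has rank $3$ and signature $(3,0)$) is the standard Nikulin-style setup and is correct in outline. Two points, however, need repair. First, a small but real error: you write that the invariant lattice ``contains the transcendental lattice $T_X$ and sits inside the Picard lattice.'' These are incompatible; since $\Tx=\pic^{\perp}$ has rank $2$ and lies in the invariant lattice, the invariant lattice cannot be contained in $\pic$. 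What is true is that the \emph{co-invariant} lattice $\Omega_{M_{20}}$ lies in $\pic$ (it is orthogonal to $\omega_X$ and contains no period).

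Second, and more seriously, your final identification step has a gap. You propose to pin down the invariant lattice by its rank, signature and discriminant form and then ``invoke the uniqueness results for indefinite or small-rank lattices in a given genus.'' The invariant lattice here is \emph{positive definite} of rank $3$ with $\det\LL=160$, so none of the uniqueness theorems for indefinite lattices apply, and a definite genus may well contain several isometry classes; single-class-ness would have to be verified by hand (mass formula or explicit enumeration of reduced forms of determinant $160$), which you do not do. Kond\={o}'s actual argument avoids this: he realizes the co-invariant lattice inside a Niemeier lattice via the action of $M_{20}\subset M_{24}$ on the Leech lattice and computes the invariant sublattice directly, obtaining the Gram matrix $\LL$ rather than only its genus. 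So either carry out the class-number check for the genus of $\LL$, or follow Kond\={o} and compute the invariant lattice explicitly; as written, the last step of your argument does not close.
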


As $\LL$ has signature $(3,0)$, its isometry group $\mathcal{O}\left(\LL\right)$ is finite, see \cite[proof of Prop. 2.1]{Kondo}. We describe it completely thanks to the following result coming from \cite[Remark 2.4]{BS}:

\begin{prop}\label{isometries}
Denote by $\rho_1$ and $\rho_2$ the isometries of $\LL$ defined by the following matrices:

$$\rho_1=\left(\begin{array}{ccc} 0 & 1 & 0 \\ 1 & 0 & 0 \\ 0 & 0 & 1\end{array}\right) \quad\mathrm{and}\quad \rho_2=\left(\begin{array}{ccc} 1 & 0 & -1 \\ 0 & -1 & 0 \\ 0 & 0 & -1\end{array}\right)\, .$$

Then the isometry group $\mathcal{O}\left(\LL\right)$ of $\LL$ is spanned by  $-\mathrm{id}_{\LL}$, $\rho_1$ and $\rho_2$. Moreover, $\mathcal{O}\left(\LL\right)$ has order $16$.
\end{prop}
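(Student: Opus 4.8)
The plan is to exploit that $\LL$ is positive definite, so $\mathcal{O}(\LL)$ is finite and every isometry permutes the finite set of vectors of each fixed norm. Writing $e_1,e_2,e_3$ for the basis realising the Gram matrix $\LL$, I would first record that every vector has norm divisible by $4$ and then enumerate the short vectors. Completing the square gives $v^{2}\ge 10c^{2}$ for $v=ae_1+be_2+ce_3$, which bounds $|c|$ and leaves only finitely many $(a,b)$ to test; solving $v^{2}=4$ amounts to $a^{2}+b^{2}+3c^{2}-ac-bc=1$, whose only integral solutions are $\pm e_1,\pm e_2$, so there are exactly four vectors of norm $4$. The same bounded search produces the eight vectors of norm $12$, namely $\pm e_3,\pm(e_1+e_3),\pm(e_2+e_3),\pm(e_1+e_2+e_3)$, and I would tabulate their inner products against $e_1$ and $e_2$.

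Next I would count $\mathcal{O}(\LL)$ directly from these lists. Any isometry $\phi$ sends the orthogonal pair $e_1,e_2$ to an ordered orthogonal pair of norm-$4$ vectors, and among $\{\pm e_1,\pm e_2\}$ there are exactly $8$ such ordered pairs. Once $\phi(e_1),\phi(e_2)$ are fixed, $\phi(e_3)$ must be a norm-$12$ vector whose inner products with $\phi(e_1),\phi(e_2)$ equal $e_3\cdot e_1=-2$ and $e_3\cdot e_2=-2$; since the eight norm-$12$ vectors map two-to-one onto the four pairs $(\pm2,\pm2)$ under $w\mapsto(w\cdot e_1,w\cdot e_2)$, exactly two of them realise any prescribed admissible pair. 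The point to stress is that every such assignment genuinely defines an element of $\mathcal{O}(\LL)$: the images have, by construction, the same Gram matrix as $e_1,e_2,e_3$, and taking determinants in $M^{t}\,\LL\,M=\LL$ (with $\det\LL=160\neq0$) forces $\det M=\pm1$, so $M\in GL_3(\mathbb{Z})$. Hence $|\mathcal{O}(\LL)|=8\cdot2=16$. Equivalently, the homomorphism to the group of signed permutations of $\{e_1,e_2\}$ is surjective with kernel $\{\mathrm{id},\ e_3\mapsto-(e_1+e_2+e_3)\}$ of order $2$.

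It then remains to show that $-\mathrm{id}$, $\rho_1$ and $\rho_2$ already generate a subgroup of order $16$, which must therefore be all of $\mathcal{O}(\LL)$. I would first check that $\rho_1,\rho_2$ are isometries (each preserves the Gram matrix of the basis) and compute that $\rho_1^{2}=\rho_2^{2}=\mathrm{id}$ while $\rho_1\rho_2$ has order $4$ with $\rho_1(\rho_1\rho_2)\rho_1=(\rho_1\rho_2)^{-1}$; this identifies $\langle\rho_1,\rho_2\rangle$ with the dihedral group of order $8$. Finally I would verify that $-\mathrm{id}$ lies outside this subgroup---its center is $\{\mathrm{id},(\rho_1\rho_2)^{2}\}$ and one computes $(\rho_1\rho_2)^{2}\neq-\mathrm{id}$---so that $\langle-\mathrm{id},\rho_1,\rho_2\rangle=\langle-\mathrm{id}\rangle\times\langle\rho_1,\rho_2\rangle$ has order $16$, forcing equality with $\mathcal{O}(\LL)$.

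The only delicate points are bookkeeping ones: the short-vector enumeration must be complete, which the inequality $v^{2}\ge 10c^{2}$ secures, and the claim that each admissible choice of $\phi(e_3)$ yields an honest lattice isometry rests on the determinant argument above rather than on any case-by-case check. There is no serious conceptual obstacle once the lists of norm-$4$ and norm-$12$ vectors are in hand.
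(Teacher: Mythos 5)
Your argument is correct and complete, but it is worth noting that the paper itself offers no proof of this proposition at all: it is quoted from Bonnaf\'e--Sarti \cite[Remark 2.4]{BS}, so there is nothing internal to compare against line by line. Your self-contained route is sound: the identity $v^2=(2a-c)^2+(2b-c)^2+10c^2$ (the same formula the paper uses later as equation \eqref{eq_L^2}) does give the bound $v^2\ge 10c^2$ and correctly yields exactly four vectors of norm $4$ (namely $\pm e_1,\pm e_2$) and eight of norm $12$; the count $8\cdot 2=16$ of possible images of the ordered basis is right, since the two inner-product conditions $\phi(e_3)\cdot\phi(e_1)=\phi(e_3)\cdot\phi(e_2)=-2$ together with the norm conditions reproduce the full Gram matrix, and the determinant argument ($\det\LL=160\neq 0$ forces $\det M=\pm 1$, hence $M\in\mathrm{GL}_3(\mathbb{Z})$) legitimately upgrades each such assignment to an honest isometry. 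The verification that $\langle\rho_1,\rho_2\rangle$ is dihedral of order $8$ and that $-\mathrm{id}$, being central, could only lie in it if it equalled $(\rho_1\rho_2)^2$ (which one checks it does not) correctly gives a subgroup of order $16$, hence all of $\mathcal{O}(\LL)$. What your approach buys is a proof from first principles where the paper only has a citation; the one point to make fully explicit in a written version is the small table of inner products showing that the eight norm-$12$ vectors map two-to-one onto the four sign patterns $(\pm 2,\pm 2)$, since the whole upper bound rests on it.
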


By \cite[Nr. 81, Table 2]{xiao}, as the K3 surface $X$ admits a faithful symplectic action of $M_{20}$, the minimal resolution of the quotient of $X$ by $M_{20}$ is a K3 surface with Picard number 20. By a result of Inose \cite[Corollary 1.2]{inose}, this also means that $X$ has Picard number 20. Denoting by $\Tx$ the {\it transcendental lattice} of $X$, i.e. the orthogonal complement of $\pic$ in $\mathrm{H}^2(X,\mathbb{Z})$, we have $\rk\Tx =2$. This lattice is even, with signature $(2,0)$. In order to have an explicit description of $T_X$, we recall some facts about K3 surfaces with Picard number $20$, which are called {\it singular K3 surfaces} (cf. \cite[Section 4]{SI}). Denote by $\mathcal{Q}$ the set of $2\times 2$ positive-definite even integral matrices. For $Q\in\mathcal{Q}$ we have:
$$Q=\left(\begin{array}{cc} 2a & b \\ b & 2c \end{array}\right),\, a,b,c\in\mathbb{Z}$$

with $a,c>0$ and $d:=4ac-b^2>0$. We define an equivalence relation $\sim$ on $\mathcal{Q}$ by:
$$\forall\  Q_1,Q_2\in\mathcal{Q},\, Q_1\sim Q_2\Longleftrightarrow\exists\gamma\in\mathrm{SL}_2(\mathbb{Z})\ ,\, Q_1=\,^t\gamma Q_2\gamma.$$
Let $\left[Q\right]$ be the equivalence class of a matrix $Q\in\mathcal{Q}$ and $\quotient{\mathcal{Q}}{\mathrm{SL}_2(\mathbb{Z})}$ the set of equivalence classes. 

\begin{thm}{(\cite[Theorem 4]{SI})}\label{T_X}
The map $X\mapsto\left[T_X\right]$ establishes a bijective correspondence from the set of singular K3 surfaces onto $\quotient{\mathcal{Q}}{\mathrm{SL}_2(\mathbb{Z})}$.
\end{thm}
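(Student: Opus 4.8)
The plan is to verify the three properties of the correspondence $X\mapsto[\Tx]$ in turn---well-definedness, injectivity and surjectivity---using the global Torelli theorem, the surjectivity of the period map, and Nikulin's lattice theory as the main external inputs. For well-definedness I first note that for a singular K3 surface the transcendental lattice $\Tx$ is even, positive definite of rank $2$, so its Gram matrix in any basis lies in $\mathcal{Q}$. Changing the basis acts on this matrix through $\mathrm{GL}_2(\mathbb{Z})$; the reason the target is $\quotient{\mathcal{Q}}{\mathrm{SL}_2(\mathbb{Z})}$ rather than the full $\mathrm{GL}_2$-quotient is that $\Tx\otimes\mathbb{R}$ carries a canonical orientation, given by the ordered pair $(\mathrm{Re}\,\omega_X,\mathrm{Im}\,\omega_X)$. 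Restricting to oriented bases leaves exactly an $\mathrm{SL}_2(\mathbb{Z})$ ambiguity, so $[\Tx]$ is well defined.

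The key point for injectivity is a rigidity statement for the period. For an even lattice $T$ of signature $(2,0)$, writing a period as $\omega=x+iy$ with $x,y\in T\otimes\mathbb{R}$, the relations $\omega^2=0$ and $\omega\bar\omega>0$ force $x$ and $y$ to be orthogonal of equal length. Hence, up to scaling, the associated period domain reduces to two points exchanged by complex conjugation, and the orientation of $T\otimes\mathbb{R}$ selects one of them. Therefore the weight-two Hodge structure on $T$ is completely determined by the oriented isometry class of $T$, and any orientation-preserving isometry between two such lattices is automatically a Hodge isometry.

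For injectivity, suppose $[\Tx]=[\mathrm{T}_{X'}]$. We obtain an orientation-preserving, hence Hodge, isometry $\phi\colon\Tx\to\mathrm{T}_{X'}$. I would then extend $\phi$ to a Hodge isometry $\Phi\colon\mathrm{H}^2(X,\mathbb{Z})\to\mathrm{H}^2(X',\mathbb{Z})$: since $\Tx$ and $\mathrm{T}_{X'}$ are primitive sublattices of the unimodular lattice $\Lambda_{K3}$, Nikulin's results on extending isometries apply once the induced maps on discriminant forms are matched, after possibly correcting $\Phi$ on the N\'eron--Severi part by an isometry preserving the positive cone and the effective classes. The global Torelli theorem then yields an isomorphism $X'\cong X$, proving injectivity.

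For surjectivity, start from $Q\in\mathcal{Q}$ and let $T$ be the corresponding even positive-definite rank-two lattice. Nikulin's existence criterion provides a primitive embedding $T\hookrightarrow\Lambda_{K3}$ whose orthogonal complement is negative definite of rank $20$; the signature $(3,19)$ of $\Lambda_{K3}$ leaves ample room for this. Choosing the canonical period $\omega\in T\otimes\mathbb{C}$ and applying the surjectivity of the period map produces a K3 surface $X$ realising it. Because $T$ is positive definite of rank $2$ and spans the $(2,0)\oplus(0,2)$ part, every class of $T$ is transcendental, so $T\subseteq\Tx$; the rank count and primitivity force $\Tx=T$ and $\rho(X)=20$. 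Thus $X$ is singular with $[\Tx]=[Q]$. The main obstacle is the injectivity step: extending the transcendental Hodge isometry to the whole K3 lattice compatibly with the sign and effective-cone constraints required by Torelli, together with a clean proof of the period rigidity that makes the oriented class $[\Tx]$ capture the full Hodge-theoretic information.
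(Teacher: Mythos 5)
The paper offers no proof of this statement: it is quoted verbatim from Shioda--Inose \cite[Theorem 4]{SI} and used as a black box, so the only meaningful comparison is with the proof in that reference. Your outline is correct and follows the same Torelli-theoretic strategy for well-definedness and injectivity: the orientation of $\Tx\otimes\mathbb{R}$ coming from $(\mathrm{Re}\,\omega_X,\mathrm{Im}\,\omega_X)$ accounts for the $\mathrm{SL}_2(\mathbb{Z})$- rather than $\mathrm{GL}_2(\mathbb{Z})$-quotient, the period rigidity for a rank-two positive definite lattice is exactly right, and the extension step you flag as the main obstacle is indeed where the work lies --- it goes through because the orthogonal complement has rank $20$ while $\ell(A_{\Tx})\leq 2$, so Nikulin's uniqueness of primitive embeddings and the surjectivity of $\mathcal{O}(\mathrm{NS}(X))\to\mathcal{O}(q_{\mathrm{NS}(X)})$ both apply, after which one corrects by $\pm\mathrm{id}$ and Weyl reflections to meet the hypotheses of the strong Torelli theorem. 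Where you genuinely diverge is surjectivity: Shioda and Inose could not invoke the surjectivity of the period map (Todorov's theorem postdates their paper) and instead construct, for each class $[Q]$, an explicit singular K3 surface via a Kummer surface $\mathrm{Km}(E_{\tau_1}\times E_{\tau_2})$ and a rational double cover --- the ``Shioda--Inose structure.'' Your period-map argument is shorter and perfectly valid today (note only that you should add the one-line observation that $\rho(X)=20$ forces $\mathrm{NS}(X)$ to have signature $(1,19)$, hence $X$ is projective and genuinely a \emph{singular} K3 surface in the sense of the statement), whereas the constructive proof yields the concrete models $\mathrm{Km}(E_{i}\times E_{i})$ and $\mathrm{Km}(E_{5i}\times E_{5i})$ that this paper relies on in Sections \ref{sec-background} and \ref{other_cases}, together with arithmetic consequences (definability over number fields) that the abstract argument does not give.
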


In particular, K3 surfaces with Picard number $20$ are classified in terms of their transcendental lattice. Moreover, we can choose a representative $Q$ in {\it reduced form}, that is $-a\leq b\leq a\leq c$ and $b^2\leq ac\leq\frac{d}{3}$ and we have the following:

\begin{thm}{(\cite[Theorem 2.4]{buell})}
No distinct reduced quadratic forms are equivalent, except for the following:
$$\left(\begin{array}{cc}
2a & a \\ a & 2c
\end{array}\right)\sim\left(\begin{array}{cc}
2a & -a \\ -a & 2c \end{array}\right)
\quad\mathrm{and}\quad
\left(\begin{array}{cc}
2a & b \\ b & 2a
\end{array}\right)\sim\left(\begin{array}{cc}
2a & -b \\ -b & 2a \end{array}\right).
$$
\end{thm}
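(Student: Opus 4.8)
The plan is to recognize the statement as the classical uniqueness theorem in Gauss's reduction theory of positive definite binary quadratic forms, and to prove it by a successive-minima argument. First I would pass from the matrix language to forms: to $Q = \begin{pmatrix} 2a & b \\ b & 2c\end{pmatrix}$ I associate the integral form $f_Q(x,y) = {}^t v\, Q\, v = 2a x^2 + 2b xy + 2c y^2$, and I note that $Q_1 = {}^t\gamma Q_2 \gamma$ with $\gamma \in \mathrm{SL}_2(\mathbb Z)$ is exactly the statement that $f_{Q_1}(v) = f_{Q_2}(\gamma v)$, i.e. proper equivalence of forms. The reducedness conditions $-a \le b \le a \le c$ and $b^2 \le ac \le d/3$ translate, writing $(A,B,C) = (2a, 2b, 2c)$, into $|B| \le A \le C$ (indeed $ac \le d/3$ is merely a restatement of $b^2 \le ac$, which itself follows from $|b| \le a \le c$), so I am exactly in the setting of reduced forms. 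The goal becomes: two distinct reduced forms are properly equivalent only in the two listed families.

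The heart of the argument is to show that for a reduced form the integers $A$ and $C$ are intrinsic. I would prove the elementary inequality that for integers with $xy \ne 0$ one has $f(x,y) \ge A - |B| + C$; combined with $f(x,0) = Ax^2$, $f(0,y) = Cy^2$, and $|B| \le A \le C$, this shows that the least nonzero value of $f$ equals $A$, attained (when $|B| < A < C$) only at $(\pm 1, 0)$, and that the least value on vectors independent from $(1,0)$ equals $C$, attained only at $(0, \pm 1)$. Since these successive minima and the discriminant $d = \det Q$ are invariants of the proper equivalence class, two equivalent reduced forms must share the same $A$, the same $C$, and hence, from $B^2 = 4AC - 4d$, the same $|B|$. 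Thus the only possible nontrivial equivalence is $(A, B, C) \sim (A, -B, C)$ with $B \ne 0$.

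It then remains to decide when this last equivalence actually holds. For sufficiency I would exhibit explicit transformations: $\gamma = \begin{pmatrix} 1 & -1 \\ 0 & 1\end{pmatrix}$ sends $(A, B, C)$ to $(A, B - 2A, A - B + C)$, which equals $(A, -A, C)$ precisely when $B = A$ (i.e. $b = a$), giving the first family; and $\gamma = \begin{pmatrix} 0 & -1 \\ 1 & 0 \end{pmatrix}$ sends $(A, B, A)$ to $(A, -B, A)$, giving the second family when $A = C$ (i.e. $a = c$). For necessity I would use the uniqueness of the minimizers: if $|B| < A < C$ and $B \ne 0$, any $\gamma$ realizing $(A, B, C) \sim (A, -B, C)$ must carry the unique (up to sign) minimizer $(\pm 1, 0)$ and second minimizer $(0, \pm 1)$ of one form to those of the other, forcing $\gamma$ to be diagonal with entries $\pm 1$; the only such matrices in $\mathrm{SL}_2(\mathbb Z)$ are $\pm I$, which act trivially on forms, so $(A, B, C) = (A, -B, C)$ and $B = 0$, a contradiction.

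I expect the main obstacle to be the careful bookkeeping of the boundary cases $|B| = A$ and $A = C$, where the minimizing vectors are no longer unique up to sign and where the two families overlap (for instance $b = a = c$): one must verify that the only equivalences so produced are exactly the two stated and that no further identifications slip in. The successive-minima inequality and the dictionary between matrix and form equivalence are routine; the delicate point is organizing the case analysis on the triple $(|B|, A, C)$ so that sufficiency and necessity together pin down precisely the two listed families.
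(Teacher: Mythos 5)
The paper does not prove this statement at all---it is quoted verbatim from Buell's book as a classical fact of Gaussian reduction theory---so there is no internal proof to compare against. Your successive-minima argument (the least nonzero value of a reduced form is $2a$, the least value on independent vectors is $2c$, hence $a$, $c$, $|b|$ are class invariants, with the two boundary families $|b|=a$ and $a=c$ realized by the explicit transformations $\left(\begin{smallmatrix}1&-1\\0&1\end{smallmatrix}\right)$ and $\left(\begin{smallmatrix}0&-1\\1&0\end{smallmatrix}\right)$) is correct and is essentially the standard proof given in the cited source.
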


Since the action is symplectic one can show that $\Tx\subset\mathrm{H}^2(X,\mathbb{Z})^{M_{20}}\simeq\LL$.
In fact, consider $x\in T_X\subset\mathrm{H}^2(X,\mathbb{Z})$ and $g\in M_{20}$. We have
$$\langle\omega_X,x\rangle=\langle g^*\omega_X,g^*x\rangle=\langle \omega_X,g^*x\rangle$$
so that $\langle\omega_X,x-g^*x\rangle=0$. Then $x-g^*x$ is in $(\mathbb{C}\omega_X)^{\perp}\cap\mathrm{H}^2(X,\mathbb{Z})$ which is $\pic$.
Moreover, $x\in T_X$ and $g^*x\in T_X$ because $g^*$ is a Hodge isometry, so it preserves the lattice $T_X$. Hence $x-g^*x\in\pic\cap T_X=\lbrace0\rbrace$ by \cite[Sec. 3.2]{Nikulin} which implies that $g^*x=x$. So $T_X$ is included in the invariant lattice $\mathrm{H}^2(X,\mathbb{Z})^{M_{20}}$ which is isomorphic to $\LL$.\\

If $(u,v)$ is a $\mathbb{Z}$-basis of $T_X\subset\LL$, then $u^2,v^2\in 4\mathbb{Z}$ and $u\cdot v\in 2\mathbb{Z}$. Hence
$$\Tx\simeq\left(\begin{array}{cc} 4a & 2b \\ 2b &4c \end{array}\right)$$
where $a,b,c\in\mathbb{Z}$ satisfy the following conditions:
\begin{equation}\label{adm}\tag{$\star$}
\begin{cases}
d:=4ac-b^2>0;\\
b^2\leq ac\leq\textstyle\frac{d}{3};\\
-a\leq b\leq a\leq c.
\end{cases}
\end{equation}

Finally, observe that $X$ contains an ample class $L\in\pic$ which is $M_{20}$-invariant and such that $L^2=4n$ for some $n$ in $\mathbb{Z}_{\geq 1}$. One of our main goals is to describe the embedding $\langle 4n\rangle\hookrightarrow\LL$ in order to construct a projective model of $X$. To do this, the following result  is a direct consequence of \cite[Section 4.1]{SD}.

\begin{prop}\label{L/2+1}
If the linear system $\vert L\vert$ is not hyperelliptic, then it defines a map: $\varphi_L:X\rightarrow\mathbb{P}^{p_a(L)}$, where $p_a(L)=\frac{1}{2}L^2+1$.
\end{prop}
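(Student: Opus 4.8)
The plan is to reduce the statement to a Riemann--Roch computation together with the vanishing of the higher cohomology of $L$. By definition the complete linear system $\vert L\vert$ gives a map $\varphi_L$ to $\mathbb{P}^{h^0(L)-1}$, so the entire content of the proposition is the identity $h^0(L)-1=p_a(L)=\frac{1}{2}L^2+1$, equivalently $h^0(X,L)=\frac{1}{2}L^2+2$. (The non-hyperelliptic hypothesis, through \cite{SD}, is what guarantees that $\varphi_L$ is an actual morphism birational onto its image; for the computation of the dimension of the target it plays no role, so I would carry out the count for an arbitrary ample $L$.)

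First I would write down Riemann--Roch for the line bundle $L$ on the K3 surface $X$. Since $K_X$ is trivial and $\chi(\mathcal{O}_X)=2$, the surface Riemann--Roch formula yields
$$\chi(L)=h^0(L)-h^1(L)+h^2(L)=\tfrac{1}{2}L\cdot(L-K_X)+\chi(\mathcal{O}_X)=\tfrac{1}{2}L^2+2.$$
This already identifies the right-hand side with $p_a(L)+1$, matching the adjunction computation $2p_a(L)-2=L^2+L\cdot K_X=L^2$ that defines $p_a(L)$.

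Next I would annihilate the two higher cohomology groups using that $L$ is ample (as recorded just before the statement). By Serre duality $h^2(X,L)=h^0(X,K_X-L)=h^0(X,-L)$, and since $L$ is ample the class $-L$ pairs negatively with any nonzero effective divisor and hence cannot be effective, so $h^2(L)=0$. For $h^1$ I would invoke Kodaira vanishing: because $K_X$ is trivial, $H^1(X,L)=H^1(X,K_X+L)=0$. Substituting $h^1(L)=h^2(L)=0$ into Riemann--Roch gives $h^0(X,L)=\tfrac{1}{2}L^2+2$, whence $\dim\vert L\vert=\tfrac{1}{2}L^2+1=p_a(L)$ and $\varphi_L\colon X\to\mathbb{P}^{p_a(L)}$.

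I do not expect a genuine obstacle here: the argument is a standard Riemann--Roch plus vanishing computation, and the only point requiring care is to cite the vanishing of $h^1(L)$ in the correct generality. I would present this as a direct consequence of the analysis of linear systems in \cite[Section 4.1]{SD}, where precisely this dimension count is established for nef line bundles of positive self-intersection on K3 surfaces (the Mumford--Ramanujam form of vanishing making the hypothesis ``nef and big'' suffice, which is stronger than what is needed for the ample $L$ at hand), the non-hyperelliptic condition being retained only to fix the geometric nature of $\varphi_L$ for use in the subsequent sections.
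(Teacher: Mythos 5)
Your proposal is correct and is exactly the standard Riemann--Roch plus vanishing computation that the paper itself does not write out but delegates entirely to the citation of Saint-Donat \cite[Section 4.1]{SD}. You have simply made explicit the argument behind that reference ($\chi(L)=\frac{1}{2}L^2+2$, $h^1(L)=h^2(L)=0$ for the ample class $L$, and adjunction giving $p_a(L)=\frac{1}{2}L^2+1$), so there is nothing to correct.
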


As we will see in Section \ref{projective_model}, the linear system $\vert L\vert$ is never hyperelliptic. As $L^2=4n$, the projective models of the K3 surfaces that we consider will be in $\mathbb{P}^{2n+1}$. 

The cases $n=1,2,10$ were already studied by C. Bonnafé and A. Sarti \cite{BS}, S. Brandhorst and K. Hashimoto \cite{BH} and S. Kond\=o \cite{Kondo}, when the surface $X$ admits the symplectic action of a maximal group $\Gamma$ containing $M_{20}$ properly. We recall their results and notations, as well as explicit equations for the K3 surfaces, in the following proposition. Trascendental lattices of these three surfaces are recalled in Appendix \ref{tab:T_X}. 
We use the standard notation $E_{\tau}$ for the elliptic curve $\mathbb{C}/(\mathbb{Z}+\tau\mathbb{Z})$ with $\tau\in\mathbb C$.

\begin{prop}{(see \cite{BS,BH,Kondo})}\label{3K3}
Let $G$ be a maximal group acting faithfully on a K3 surface $X$ which strictly contain the Mathieu group $M_{20}$. Recall that $L\in\pic$ is an ample class which is $M_{20}$-invariant. Thus there only are three possible cases:
\begin{enumerate}[1.]
    \item  $|G|=3840$ and $L^2=40$: the K3 surface $X_{\mathrm{Ko}}$ is the minimal resolution of the quotient of the Fermat quartic $\left\lbrace x^4+y^4+z^4+t^4=0\right\rbrace$ in $\mathbb P^3$ by the symplectic involution $(x:y:z:t)\mapsto (x:y:-z:-t)$. It corresponds to the Kummer surface $\mathrm{Km}\left(E_i\times E_i\right)$ (see \cite{Kondo}, \cite[Section 3]{BS}). 
    
    \item  $|G|=1920$ and $L^2=4$: the K3 surface $X_{\mathrm{Mu}}$ is defined as the following quartic in $\mathbb P^3$:
    \begin{equation}
    \label{eq-mukai}    
     \{x^4+y^4+z^4+t^4-6\left(x^2y^2+x^2z^2+x^2t^2+y^2z^2+y^2t^2+z^2t^2\right)=0\}.\end{equation}
    It corresponds to the Kummer surface $\mathrm{Km}\left(E_{i\sqrt{10}}\times E_{i\sqrt{10}}\right)$ (see \cite[p. 190]{Mukai} and \cite[Section 4]{BS}).
    \item  $|G|=1920$ and $L^2=8$: the K3 surface $X_{\mathrm{BH}}$ is defined as the complete intersection of the three quadrics in $\mathbb{P}^5$:
\begin{equation*}\label{BH}
\begin{cases}
x_0^2+x_3^2-\phi x_4^2+\phi x_5^2=0\\
x_1^2-\phi x_3^2+x_4^2-\phi x_5^2=0\\
x_2^2+\phi x_3^2-\phi x_4^2+x_5^2=0
\end{cases}\end{equation*}
where $\phi=\frac{1+\sqrt5}{2}$ (see \cite[Section 5]{BS}).
\end{enumerate}
\end{prop}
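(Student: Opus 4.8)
The plan is to combine the classification of maximal finite automorphism groups of K3 surfaces with the lattice theory recalled above. The essential external input is the Brandhorst--Hashimoto classification \cite{BH}, together with \cite{BS}: filtering from their lists those maximal groups $G$ that strictly contain $M_{20}$ leaves exactly three, of orders $3840$, $1920$ and $1920$. I would take this enumeration as given. Since $M_{20}$ already realises the maximal possible order, $960$, of a symplectic group, the symplectic part $G_0=\ker\alpha$ of any such $G$ satisfies $M_{20}\subseteq G_0$ and $|G_0|\leq 960$, hence $G_0=M_{20}$. Consequently $G$ acts on the invariant lattice $\LL=\mathrm{H}^2(X,\mathbb{Z})^{M_{20}}$ through the quotient $G/M_{20}$, and this action is faithful: an element acting trivially on $\LL\supset T_X$ fixes $\omega_X\in T_X\otimes\mathbb{C}$, hence is symplectic and lies in $M_{20}$.

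Next I would pin down $L^2$ by a purely lattice-theoretic computation. By Proposition \ref{isometries} we have $\mathcal{O}(\LL)=\langle-\mathrm{id}_{\LL},\rho_1,\rho_2\rangle$ of order $16$, so $G/M_{20}$ is one of its subgroups of order $4$ (the $3840$ case) or $2$ (the two $1920$ cases). The $M_{20}$-invariant part of $\pic$ is the rank-one lattice $\LL\cap\pic=\langle L\rangle$, and $G$-invariance of the ample class forces $L$ to generate the fixed sublattice $\LL^{G/M_{20}}$; since the complementary rank-two sublattice $T_X$ carries the nontrivial (non-symplectic) action, this fixed sublattice has rank one in each case. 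Enumerating the order-$4$ and order-$2$ subgroups of $\mathcal{O}(\LL)$, computing their fixed vectors in $\LL$, and extracting the primitive generator then yields $L^2=40$ for the order-$4$ image and $L^2=4$, $L^2=8$ for the two order-$2$ images, as claimed.

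With $L$ determined, the orthogonal complement $\langle L\rangle^{\perp}$ in $\LL$ together with the Shioda--Inose correspondence (Theorem \ref{T_X}) identifies the transcendental lattice $T_X$ of each surface, so that $X_{\mathrm{Ko}}$, $X_{\mathrm{Mu}}$ and $X_{\mathrm{BH}}$ are recognised as singular K3 surfaces; in the first two cases the resulting classes $[T_X]$ are those of the Kummer surfaces $\mathrm{Km}(E_i\times E_i)$ and $\mathrm{Km}(E_{i\sqrt{10}}\times E_{i\sqrt{10}})$. It then remains to exhibit the explicit projective models: the minimal resolution of the Fermat quartic modulo the stated involution for $X_{\mathrm{Ko}}$, the Mukai quartic \eqref{eq-mukai} for $X_{\mathrm{Mu}}$, and the complete intersection of three quadrics in $\mathbb{P}^5$ for $X_{\mathrm{BH}}$.

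I expect the main obstacle to be the verification that these concrete equations realise the abstract data: one must confirm that each surface is a K3 with the predicted Picard and transcendental lattices, that the prescribed group $G$ acts by automorphisms with the correct character on $\omega_X$, and --- most delicately --- that the group is genuinely maximal and that the Mukai quartic indeed carries the Kummer structure of $\mathrm{Km}(E_{i\sqrt{10}}\times E_{i\sqrt{10}})$. This matching between the lattice-theoretic classification and the explicit models is exactly the geometric work carried out in \cite{Kondo}, \cite{BS} and \cite{BH}, and accordingly I would invoke those computations rather than reproduce them.
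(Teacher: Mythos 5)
The paper gives no proof of this proposition: it is stated purely as a recollection of known results, so the only ``proof'' to compare against is the citation of \cite{BS}, \cite{BH} and \cite{Kondo}. Your sketch is consistent with that treatment --- it defers all the substantive geometric content (the enumeration of the three maximal groups, the verification of the explicit models and of the Kummer structures) to exactly those references --- and the extra lattice-theoretic scaffolding you add for deducing $G_0=M_{20}$ and the three values of $L^2$ is sound in outline, though the enumeration of the rank-one fixed sublattices of the relevant subgroups of $\mathcal{O}(\LL)$ is asserted rather than carried out.
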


 \begin{rmk}
We observe that, according to \cite[Corollary 7.3]{D}, up to projective transformation there exists a unique octic K3 surface $X_8\subset\mathbb P^5$ which admits a faithful symplectic action of the Mukai group $M_{20}$.\end{rmk}

\section{The case $L^2=12$}\label{L^2=12}

We now start by studying the first new case, i.e. the polarization with $L^2=4n$ with $n=3$.

\subsection{Existence of the projective model}
Suppose that $L^2=12$. We denote by $(e,f,h)$ the standard basis of $\LL$, that is the vectors in $\LL$ which give the matrix of the bilinear form written before. We search $\lambda ,\mu ,\delta\in\mathbb{Z}$ such that
$$L=\lambda e+\mu f+\delta h.$$
Thanks to \cite[Lemma 2.8]{BS} we have
\begin{equation}\label{eq_L^2}
L^2=(2\lambda -\delta)^2+(2\mu -\delta)^2+10\delta^2 .
\end{equation}
If $L^2=12$, then possible solutions for $\delta$ are $\delta\in\lbrace -1,0,1\rbrace$ and
\begin{itemize}
\item if $\delta =0$, then $2\lambda^2+2\mu^2=12$, that is $\lambda^2+\mu^2=6$. This is not possible since $6$ is not the sum of two squares;
\item if $\delta=1$, then $(2\lambda-1)^2+(2\mu-1)^2=2$, that is $(2\lambda-1)^2=(2\mu-1)^2=1$. Hence $\lambda ,\mu\in\lbrace 0,1\rbrace$ and then
$$L\in\lbrace h;e+h;f+h;e+f+h\rbrace;$$
\item if $\delta=-1$, as before one has $\lambda ,\mu\in\lbrace 0,-1\rbrace$, then
$$L\in\lbrace -h;-e-h;-f-h;-e-f-h\rbrace.$$
\end{itemize}

Hence there are 8 possibilities for $L$ and they are all in the same orbit, up to isometry. In fact observe that the possibilities for $\delta=-1$ are the same as in the case $\delta=1$ up to the isometry $-\mathrm{id}_{\LL}$. Moreover we have:
\[-\mathrm{id}\circ\rho_2(e+f+h) =f+h, \quad 
\rho_1(f+h) = e+h, \quad
-\mathrm{id}\circ\rho_2(e+h) = h. \]

Therefore, up to isometry, there exists a unique embedding of lattices $\langle 12\rangle\hookrightarrow\LL$ which maps $L$ on $h$. 

We assume now that $L=h$. As $\Tx =L^{\perp}\cap\LL$, the orthogonal complement of the lattice $\mathbb{Z}h$ in $\LL$ is given by vectors $\lambda e+\mu f+\delta h$ such that
$$\langle \lambda e+\mu f+\delta h , h\rangle =0,\quad  \lambda ,\mu ,\delta\in\mathbb{Z}.$$

One can choose $u_1=f-e$ and $u_2=3e+3f+h$. Then $u_1$ and $u_2$ span the transcendental lattice $\Tx$.
Moreover $u_1^2=8$, $u_2^2=60$ and $\langle u_1,u_2\rangle =0$, so that $$\Tx = \left(\begin{array}{cc} 8 & 0 \\ 0 & 60 \end{array}\right)\, .$$ 
Using the notations introduced before, one has $a=2$, $b=0$ and $c=15$ and these integers satisfy the conditions \eqref{adm}, hence this is the matrix we are looking for.

\subsection{Construction of the K3 surface}
\label{modelX12}
The aim of this section is to find the K3 surface in $\mathbb{P}^7$ with an action of $M_{20}$. To understand the problem, we have the following diagram:

\begin{displaymath}
\xymatrix{
& & & M_{20}\ar@{-->}[ld] \ar@{^{(}->}[d] & \\
1\ar[r] & \mathbb{C}^{\times}\ar[r] & \mathrm{GL}_8(\mathbb{C})\ar[r] & \mathrm{PGL}_8(\mathbb{C})\ar[r] & 1 }
\end{displaymath}

In general, the arrow $M_{20}\longrightarrow\mathrm{GL}_8(\mathbb{C})$ does not exist. So we must consider a group $\tilde{M_{20}}\subset\mathrm{GL}_8(\mathbb{C})$ such that:

\begin{itemize}
\item $Z(\tilde{M_{20}})\simeq\mu_d$;
\item $\quotient{\tilde{M_{20}}}{\mu_d}\simeq M_{20}$.
\end{itemize}

Here $\mu_d$ denotes the group of primitive $d$-th roots of unity and $Z(\tilde{M_{20}})$ is the center of the group $\tilde{M_{20}}$ in $\mathrm{GL}_8(\mathbb{C})$. Then we have the following diagram:
\begin{displaymath}
\xymatrix{
1\ar[r] & \mu_d\ar[d]\ar[r] & \tilde{M_{20}}\ar[d]\ar[r] & M_{20}\ar@{^{(}->}[d]\ar[r] & 1 \\
1\ar[r] & \mathbb{C}^{\times}\ar[r] & \mathrm{GL}_8(\mathbb{C})\ar[r] & \mathrm{PGL}_8(\mathbb{C})\ar[r] & 1 }
\end{displaymath}
We want to understand the non-trivial central extensions of $M_{20}$ by a cyclic group and, thanks to the web version of the Atlas of finite groups \cite{atlas}, we know there are $6$ such extensions:
\begin{enumerate}
\item[•]$M_{20}$;
\item[•]$H_1$, $H_2$ and $H_3$, with $Z\left(H_k\right)\simeq\mu_2$, $k=1,2,3$;
\item[•]$G_1$ and $G_2$,  with $Z\left(G_k\right)\simeq\mu_4$, $k=1,2$.
\end{enumerate}

Denote by $G$ one of these groups. We are looking for the irreducible representations of $G$ in $\mathrm{GL}_8(\mathbb{C})$. In our case, by using the computer algebra system MAGMA \cite{MAGMA}, we obtain that only the group $G_1$ gives such a representation. Let $\rho:G\rightarrow\mathrm{GL}_8(\mathbb{C})$ be this representation. By abuse of notation, we identify $G$ with the image $\rho(G)$. The following holds:
$$Z(G)\simeq\mu_4 \quad\mathrm{and}\quad \quotient{G}{Z(G)}\simeq M_{20}\, .$$

Let $X$ be the K3 surface and assume for the moment that $X$ is defined by quadrics\footnote{In Section \ref{quadrics} we will show that the model is not hyperelliptic and $X$ is only defined by quadrics.} in $\mathbb{P}^7$. Denote $F:=\mathbb{C}\left[x_1,\dots ,x_8\right]_2$ the vector space of homogeneous polynomials of degree $2$ in $8$ variables and let $f_1,\dots ,f_n\in F$ be the equations of the quadrics defining $X$. 
Then $\mathbb{C}f_1+\dots +\mathbb{C}f_n\subset F$ must be stable by the action of $G$. We want to find the stable subspaces of $F$. By the theory of linear representations \cite[Proposition 8]{serre}, we have the following decomposition:
$$F = \bigoplus\limits_{S\in\mathrm{Irr}(G)}\mathrm{Im}\left(p_S^F\right)$$
where $p_S^F\in\mathrm{end}_{\mathbb{C}}(F)$ is the projection on the isotypic component associated with $S$.
Thanks to MAGMA, we obtain $F=F_6\oplus F_{10}\oplus F_{20}$, where the $F_i$ are $G$-stable, irreducible and of dimension $i$. Let $X_k$ be the subvariety in $\mathbb{P}^7$ defined by the elements of $F_k$. By using MAGMA, we can compute the dimension of each space: $\dim\left(X_6\right)=4$, $\dim\left(X_{10}\right)=2$ and $\dim\left(X_{20}\right)=-1$. This means that $X_{20}=\emptyset$ and $X_{10}$ is a surface, thus it is a good candidate for the surface we are looking for. Thanks to standard commands of MAGMA, we check the following conditions:
\begin{enumerate}
    \item[•] the canonical divisor is trivial;
    \item[•] the surface is smooth.
\end{enumerate}

At this point, it remains two possibilities: either $X_{10}$ is an abelian surface or it is a K3 surface. To conclude, we will compute the Euler characteristic $\chi(X_{10})$ of $X_{10}$ and therefore distinguish if $\chi(X_{10})=0$ and $X_{10}$ is abelian or $\chi(X_{10})=24$ and $X_{10}$ is a K3 surface. We consider
$$\mathcal{L} : g\in M_{20} \longmapsto \sum\limits_{i\geq 0} (-1)^i\mathrm{Tr}\left(g^{\ast}\vert\mathrm{H}^i\left(X_{10}\right)\right).$$

On the one hand, by the Lefschetz fixed point formula \cite[Chapter 3.4]{GH}, we have $\mathcal{L}(1)=\chi\left(X_{10}\right)$. On the other hand, we can compute $\mathcal L(1)$ using the following lemma:

\begin{lemma}\label{repr}
Let $\Gamma$ be a finite group and $f:\Gamma\rightarrow\mathbb{C}$ be a function which is the difference of two characters of representation. Then
$$f(1)\equiv -\sum\limits_{\underset{\gamma\neq 1}{\gamma\in\Gamma}} f(\gamma)\mod\vert\Gamma\vert\, .$$
\end{lemma}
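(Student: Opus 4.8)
The plan is to prove the claimed congruence by exploiting orthogonality of irreducible characters, which is the natural tool whenever a class function of a finite group is evaluated at the identity. Since $f = \chi_1 - \chi_2$ is the difference of two characters, it is a virtual character, hence a $\mathbb{Z}$-linear combination of the irreducible characters of $\Gamma$; write $f = \sum_{\rho \in \mathrm{Irr}(\Gamma)} m_\rho \chi_\rho$ with $m_\rho \in \mathbb{Z}$. The key identity I would use is the first orthogonality relation applied to the trivial representation: for any character $\chi_\rho$, one has
\begin{equation*}
\frac{1}{\vert\Gamma\vert}\sum_{\gamma\in\Gamma}\chi_\rho(\gamma) = \langle \chi_\rho, \chi_{\mathrm{triv}}\rangle,
\end{equation*}
which equals $1$ if $\rho$ is trivial and $0$ otherwise. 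Summing $f$ over all of $\Gamma$ therefore isolates exactly the multiplicity of the trivial representation.

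Concretely, I would compute $\sum_{\gamma\in\Gamma} f(\gamma) = \sum_\rho m_\rho \sum_{\gamma} \chi_\rho(\gamma) = m_{\mathrm{triv}}\cdot\vert\Gamma\vert$, so that $m_{\mathrm{triv}} = \frac{1}{\vert\Gamma\vert}\sum_{\gamma}f(\gamma)$ is an integer. Splitting off the $\gamma = 1$ term gives
\begin{equation*}
f(1) + \sum_{\substack{\gamma\in\Gamma\\ \gamma\neq 1}} f(\gamma) = m_{\mathrm{triv}}\cdot\vert\Gamma\vert,
\end{equation*}
and since the right-hand side is a multiple of $\vert\Gamma\vert$, reducing modulo $\vert\Gamma\vert$ yields $f(1) \equiv -\sum_{\gamma\neq 1} f(\gamma) \bmod \vert\Gamma\vert$, which is exactly the statement.

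The argument is essentially a one-line consequence of orthogonality once the setup is in place, so there is no serious obstacle; the only point requiring care is justifying that $f(1)$, each $f(\gamma)$, and the multiplicities $m_\rho$ are all integers. Integrality of $f(1) = \chi_1(1) - \chi_2(1)$ is immediate since character degrees are positive integers, and integrality of $m_{\mathrm{triv}}$ follows from its interpretation as a difference of inner products $\langle\chi_1,\chi_{\mathrm{triv}}\rangle - \langle\chi_2,\chi_{\mathrm{triv}}\rangle$, each of which counts the multiplicity of the trivial summand in a genuine representation and is therefore a non-negative integer. I note that the individual values $f(\gamma)$ for $\gamma\neq 1$ need not be integers (they are algebraic integers in general), but this does not affect the conclusion: the congruence is a statement about the integer $f(1)$ and the sum $\sum_{\gamma\neq 1}f(\gamma)$, and it holds precisely because their total is the integer multiple $m_{\mathrm{triv}}\vert\Gamma\vert$. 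This is exactly what makes the lemma useful for computing $\mathcal{L}(1) = \chi(X_{10})$ from the values $\mathcal{L}(g)$ at the non-identity elements, where $\vert M_{20}\vert = 960$ is large enough to pin down the small integer $\chi(X_{10}) \in \{0, 24\}$.
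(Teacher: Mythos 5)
Your proof is correct and is essentially the same as the paper's: the paper averages the representation over $\Gamma$ to get the projector onto the trivial isotypic component, whose trace $\frac{1}{\vert\Gamma\vert}\sum_{\gamma}f(\gamma)$ is an integer, which is exactly your $m_{\mathrm{triv}}=\langle f,\chi_{\mathrm{triv}}\rangle$ obtained from the first orthogonality relation. The only cosmetic difference is that you phrase the integrality via multiplicities in the irreducible decomposition while the paper phrases it as $\dim\mathrm{Im}(p^E_{\mathbb{C}})$, and these coincide.
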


Before proving this result, we need to recall some definitions and results coming from representation theory \cite[Chapters 1 and 2]{serre}.\\
Let $\left(E,\rho_E\right)$ be a linear representation of the group $\Gamma$. This means that $E$ is a complex vector space and $\rho_E:\Gamma\rightarrow\mathrm{GL}(E)$ is a group morphism. In particular, $\rho_{\mathbb{C}}$ is the so-called {\it trivial representation}, that is for all $\gamma\in\Gamma$, $\rho_{\mathbb{C}}(\gamma)=\mathrm{Id}_{\mathbb{C}}$. We call a {\it character} of the representation $\rho_E$ the morphism $\chi_E:\Gamma\rightarrow\mathbb{C}$ defined by 
\[\chi_E(\gamma)=\mathrm{Tr}\ \rho_E(\gamma), \ \gamma\in\Gamma.\] 
Finally,
we denote by $p_S^E$ the projection on the isotypic component associated with $S$. If $S$ is an irreducible representation of $\Gamma$, we have a characterization of this endomorphism which is
    $$p_S^E=\frac{\dim S}{\vert\Gamma\vert}\sum\limits_{\gamma\in\Gamma}\overline{\chi_S(\gamma)}\rho_E(\gamma).$$
\begin{proof}[Proof of Lemma \ref{repr}]
Let $S=\mathbb{C}$. For the endomorphism $p_\mathbb C^E$ we have
$$p^E_{\mathbb{C}}=\frac{1}{\vert\Gamma\vert}\sum\limits_{\gamma\in\Gamma}\overline{\chi_{\mathbb{C}}(\gamma)}\rho_E(\gamma).$$
As $\rho_{\mathbb{C}}$ is the trivial representation of $\Gamma$, for all $\gamma\in\Gamma$ we have $\chi_\mathbb{C}(\gamma)=\mathrm{Tr}\left(\mathrm{Id_{\mathbb{C}}}\right)$ and then
$$p^E_\mathbb{C}=\frac{1}{\vert\Gamma\vert}\sum\limits_{\gamma\in\Gamma}\rho_E(\gamma).$$
Taking the trace of this expression, with $f=\chi_E:\Gamma\rightarrow \mathbb C$ and multiplying it by $\vert\Gamma\vert$ we obtain the following:
$$f(1)=\vert\Gamma\vert\mathrm{Tr}\left(p_{\mathbb{C}}^E\right)-\sum\limits_{\underset{\gamma\neq 1}{\gamma\in\Gamma}}f(\gamma).$$
 
As $\mathrm{Tr}\left(p_{\mathbb{C}}^E\right)=\dim\mathrm{Im}\left(p_{\mathbb{C}}^E\right)$ is a positive integer, we can look at this expression modulo $|\Gamma|$. 
Similarly, if $f:\Gamma\rightarrow\mathbb{C}$ is as in the assumption,  we obtain the thesis.

\end{proof}

Let $X_{10}^g$ be the subspace of $X_{10}$ where $g\in M_{20}$ acts as the identity and observe that we have only isolated fixed points on it. Then, thanks again to the Lefschetz formula, for $g\neq 1$ we have $\mathcal{L}(g)=\chi\left(X_{10}\right)=\left\vert X_{10}^g\right\vert$. We compute this last term with MAGMA and by Lemma \ref{repr} we obtain
$$\mathcal{L}(1)\equiv 24\mod 960\, .$$

The surface $X_{10}$ has Euler characteristic equal to $24$, hence it is a K3 surface. Finally we get the equations for the quadrics which define the K3 surface, see Appendix \ref{app-A}.

\section{Other cases}\label{other_cases}
In order to repeat the construction for other values of $L^2$, we first observe for which $n$ the equation \eqref{eq_L^2} admits solution.

\begin{prop}\label{prop_no_existence}
The equation
\begin{equation}\label{2eq_L^2}
4n=(2\lambda -\delta)^2+(2\mu -\delta)^2+10\delta^2 \, .
\end{equation}
admits a solution $(\lambda,\mu,\delta)$ if and only if $n$ can not be expressed as $4^i(16j+6)$ for some non-negative integers $i,j$. 
\end{prop}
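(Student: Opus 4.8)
The plan is to reduce equation \eqref{2eq_L^2} to a representability question for the diagonal ternary form $x^2+y^2+5z^2$, and from there to the classical three-squares theorem. First I would divide \eqref{2eq_L^2} by $4$: using the identity $(2\lambda-\delta)^2+(2\mu-\delta)^2+10\delta^2=4\,q(\lambda,\mu,\delta)$ with $q(\lambda,\mu,\delta)=\lambda^2+\mu^2+3\delta^2-\lambda\delta-\mu\delta$, the equation is solvable if and only if $q$ represents $n$. The next step is the substitution $a=\lambda+\mu-\delta$, $b=\lambda-\mu$, $c=\delta$, which gives $2\,q(\lambda,\mu,\delta)=a^2+b^2+5c^2$; conversely a triple $(a,b,c)$ comes from integers $\lambda,\mu,\delta$ exactly when $a+b+c$ is even, and this is automatic since $a^2+b^2+5c^2\equiv a+b+c \pmod 2$ while the value $2n$ is even. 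Hence \eqref{2eq_L^2} is solvable if and only if $x^2+y^2+5z^2$ represents $2n$.

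The key step is to turn this into a three-squares statement. Multiplying $2n=a^2+b^2+5c^2$ by $5$ and applying the two-squares identity $5(a^2+b^2)=(2a+b)^2+(a-2b)^2$ yields $10n=(2a+b)^2+(a-2b)^2+(5c)^2$, so $10n$ is a sum of three squares. For the converse I would start from $10n=X^2+Y^2+Z^2$ and reduce modulo $5$: since the only ways three squares can sum to $0$ modulo $5$ are the multisets $\{0,0,0\}$ and $\{0,1,4\}$, at least one of $X,Y,Z$ is divisible by $5$, say $Z=5c$. Then $X^2+Y^2\equiv 0\pmod 5$ forces $Y\equiv\pm 2X\pmod 5$, and after possibly changing the sign of $Y$ the numbers $a=(2X+Y)/5$ and $b=(X-2Y)/5$ are integers with $a^2+b^2=(X^2+Y^2)/5$, whence $a^2+b^2+5c^2=2n$. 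Thus $x^2+y^2+5z^2$ represents $2n$ if and only if $10n$ is a sum of three integer squares. This mod-$5$ descent is the part I expect to require the most care, as it is where the specific arithmetic of the number $5$ enters.

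To finish, I would invoke the Gauss--Legendre three-squares theorem: $10n$ is a sum of three squares if and only if $10n\neq 4^k(8l+7)$ for all non-negative integers $k,l$. It then remains a purely $2$-adic bookkeeping to check that $10n=4^k(8l+7)$ is solvable exactly when $n=4^i(16j+6)$: since $v_2(10n)=1+v_2(n)$ must equal $2k$, the valuation $v_2(n)=2k-1$ is odd, and writing $n=2^{2k-1}n'$ with $n'$ odd, the condition $5n'\equiv 7\pmod 8$ is equivalent to $n'\equiv 3\pmod 8$; combining these gives $n=4^{k-1}(16j+6)$. Negating, \eqref{2eq_L^2} admits a solution precisely when $n$ is not of the form $4^i(16j+6)$. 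The main obstacle is establishing the equivalence with the three-squares problem in the previous paragraph; once that is in hand the rest is elementary, and in particular no genus-theoretic input on $x^2+y^2+5z^2$ is required.
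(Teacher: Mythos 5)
Your proof is correct, and it takes a genuinely different route from the paper. The paper substitutes $x=2\lambda-\delta$, $y=2\mu-\delta$, $z=\delta$ and quotes the known representability result for the Ramanujan ternary form $x^2+y^2+10z^2$ (an even positive integer is so representable if and only if it is not of the form $4^i(16j+6)$, citing \cite{Ram}), the only work being the mod-$4$ parity check that $\lambda=\frac{\pm x\pm z}{2}$ and $\mu=\frac{\pm y\pm z}{2}$ are integers. You instead divide by $4$ to get the form $q=\lambda^2+\mu^2+3\delta^2-\lambda\delta-\mu\delta$ representing $n$, diagonalize it to $x^2+y^2+5z^2$ representing $2n$ (with the integrality of the inverse substitution following from the parity identity $a^2+b^2+5c^2\equiv a+b+c\pmod 2$), and then prove from scratch --- via the composition identity $5(a^2+b^2)=(2a+b)^2+(a-2b)^2$ and a mod-$5$ descent using that $-1$ is a quadratic residue mod $5$ --- that this holds if and only if $10n$ is a sum of three squares; the Gauss--Legendre theorem and the $2$-adic computation matching $10n=4^k(8l+7)$ with $n=4^{k-1}(16j+6)$ finish the argument. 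I checked the identities and the descent (in particular that a representation $10n=X^2+Y^2+Z^2$ forces one variable divisible by $5$, since the residues must form the multiset $\{0,0,0\}$ or $\{0,1,4\}$) and they are all correct. What your approach buys is self-containedness: you need only the classical three-squares theorem, whereas the paper leans on the representability statement for $x^2+y^2+10z^2$ as a black box; the cost is a longer argument. As a consistency check, the two reductions agree, since $4n=x^2+y^2+10z^2$ with $x\equiv y\pmod 2$ is equivalent to $2n=\left(\frac{x+y}{2}\right)^2+\left(\frac{x-y}{2}\right)^2+5z^2$.
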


\begin{proof}
By Ramanujan ternary quadratic form, an even positive integer can be expressed as $x^2+y^2+10z^2$
for some $x,y,z\in\mathbb Z$ if and only if it is not of the form $4^i(16j+6)$ for some non-negative $i,j$, see \cite{Ram}. Then if $n\neq 4^i(16j+6)$, there exist $x,y,z\in\mathbb Z$ such that
\begin{equation}\label{eq-ram}
    4n=x^2+y^2+10z^2.
\end{equation}
Thus we can compute $(\lambda,\mu,\delta)$ satisfying \eqref{2eq_L^2} from the triple $(x,y,z)$:
$$\lambda=\frac{\pm x\pm z}2,\ \mu=\frac{\pm y\pm z}{2},\ \delta=\pm z.$$

Observe that $\lambda,\mu\in\mathbb Z$. 
In fact, if $z$ is even, then by \eqref{eq-ram} the sum $x^2+y^2$ has to be congruent to 0 modulo 4.
Since 3 is not a square modulo 4, then necessarily \[x^2\equiv 0\mod 4, \quad y^2\equiv 0\mod 4.\] This implies that $x^2$ and $y^2$ are even, so $x$ and $y$ are even too. Thus $\lambda,\mu\in\mathbb Z$. Similarly, if $z$ is odd, then $z^2\equiv 1 \mod 4$. Thus by \eqref{eq-ram} \[x^2\equiv 1\mod 4, \quad y^2\equiv 1\mod 4,\]  $x$ and $y$ are odd too and $\lambda,\mu\in\mathbb Z$. 
\end{proof}

Computations similar to the ones of Section \ref{L^2=12} allow to study the existence of the K3 surface with $L^2=4n$ for any value of $n$ (except for cases of Proposition \ref{prop_no_existence}). We show the results for $3\leq n\leq 10$, in order to give examples in several cases. We will see that some values  admit just an embedding, whereas for other values there are more embeddings and we will show why they are not in the same orbit. 

In Table \ref{tab:L^2}, we compute the values of $\lambda$, $\mu$ and $\delta$. To get all the possibilities, one can act on the given $(\lambda,\mu,\delta)$ with the elements of the isometry group $\mathcal O(\mathbb L_{20})$, i.e exchange $\lambda$ and $\mu$ and multiply the vector by $-1$.

\begin{table}[h]
    \centering
    \begin{tabular}{|c|c|cc|c|c|c|cc|cc|}
    \hline
         $L^2$ & $16$ & \multicolumn{2}{c|}{$20$} & $24$ & $28$ & $32$ & \multicolumn{2}{c|}{$36$} & \multicolumn{2}{c|}{$40$} \\
    \hline\hline
        $\lambda$ & 2 & 1 & $-1,2$ & - & 2 &2 & 3 & $-2,3$ & $3$ & $1,-1$ \\
    \hline
        $\mu$ & 0 & 2 & $0,1$ & - & -1 & 2 & 0 & $0,1$ & 1 & $1,-1$ \\
    \hline
        $\delta$& 0 & 0 & 1 & - & 1 & 0 & 0 & 1 & 0 & 2  \\
    \hline
    \end{tabular}
    \caption{Cases up to $L^2=40$}
    \label{tab:L^2}
\end{table}
  \begin{rmk}\label{diff_delta}
    Two vectors $(\lambda,\mu,\delta)$ and $(\lambda',\mu',\delta')$ with $\delta\ne\pm\delta'$ are not in the same orbit by the action of $\mathcal O(\mathbb L_{20})$.
    This follows from the fact that $\rho_1(\lambda,\mu,\delta)=(\mu,\lambda,\delta)$ and both
    $\rho_2$ and $-id_{\LL}$ change the sign of $\delta$.
    
    Observe that the converse is not true, i.e. there exists vectors $(\lambda,\mu,\delta), (\lambda',\mu',\delta')$ such that $\delta=\delta'$ but not in the same orbit by the action of $\mathcal O(\LL)$.
    For example for $L^2=300$, the two vectors $(\lambda,\mu,\delta)=(3,6,5)$ and $(\lambda',\mu',\delta')=(5,0,5)$ are not in the same orbit, but $\delta=\delta'$.
    \end{rmk}

\subsubsection*{$L^2=16$} There are $4$ possibilities for $(\lambda,\mu,\delta)$ and they are in the same orbit modulo isometry. 
Hence there exists a unique embedding $\langle 16\rangle\hookrightarrow\LL$ which maps $L$ to $2e$. The lattice $\Tx$ obtained in this case is the same as the case $L^2=4$ and we will see in Section \ref{prim} the relation between these two examples.
    
\subsubsection*{$L^2=20$} Up to isometry, we obtained two embedding $\langle 20\rangle\hookrightarrow\LL$:
    $$L\longmapsto e+2f \quad\mathrm{and}\quad L\longmapsto f-h\, .$$
    These embeddings are not in the same orbit, due to Remark \ref{diff_delta}. 
  
   Let us consider the first embedding $L\longmapsto e+2f$; we can choose $u_1=f-2e$ and $u_2=e+f+2h$ as generators of the transcendental lattice and get 
    $$\Tx=\left(\begin{array}{cc} 20 & 0 \\ 0 & 40 \end{array}\right).$$
    In the second case with $L\longmapsto f-h$, we take
    $u_1=2f+e+h$ and $u_2=f-3e$ and get
    $$\Tx=\left(\begin{array}{cc} 20 & 0 \\ 0 & 40 \end{array}\right).$$
    For both cases the conditions \eqref{adm} hold.
 \begin{rmk} 
We observe that for $L^2=20$ there are two non isometric vectors $(\lambda,\mu,\delta)$ but the matrix for the lattice $T_X$ is the same, therefore the same K3 surface admits two different actions of $M_{20}$.
   \end{rmk}
\subsubsection*{$L^2=24$} In this case there is no embedding by Proposition \ref{prop_no_existence}.

\subsubsection*{$L^2=28$} In this case $\delta=\pm1$. If $\delta=1$, then $\lambda,\mu\in\{2,-1\}$ and if $\delta=-1$, then $\lambda,\mu\in\{-2,1\}$. This gives 8 possibilities for $(\lambda,\mu,\delta)$, all in the same orbit. Thus we can choose $L\mapsto e+f-h$. Taking $u_1=f-e$ and $u_2=4e+4f+3h$  we compute
    $$\Tx=\left(\begin{array}{cc} 8 & 0 \\ 0 & 140 \end{array}\right)$$
    and the conditions \eqref{adm} hold for this matrix.
    \subsubsection*{$L^2=32$} In this case one obtains the same $\Tx$ as the case $L^2=8$. This case is explained in Section \ref{prim}.
    \subsubsection*{$L^2=36$} In this case, there are two possibilities:
    $$L\mapsto 3e \quad\mathrm{and}\quad L\mapsto 3e+h\, .$$
    
    The first embedding which maps $L$ on $3e$ gives the same transcendental lattice as $L^2=4$ because $36=3^2\cdot 4$ (see Section \ref{prim}). 
    
    For the second embedding, one can consider $u_1=3f+h$ and $u_2=2f+e-h$. 
The matrix then is $$\Tx=\left(\begin{array}{cc} 36 & 12 \\ 12 & 44 \end{array}\right)$$
and the conditions \eqref{adm} hold for this matrix.    
    \subsubsection*{$L^2=40$} There are two possibilities, up to isometry. One is $L\mapsto e+f+2h$ and this one is studied in \cite{Kondo}.
    The other one is $L\mapsto 3f+e$.
    Taking $u_1=f-e+h$ and $u_2=2e+h$ one gets $u_1^2=u_2^2=20, u_1\cdot u_2=0$, i.e. 
    $$\Tx=\left(\begin{array}{cc} 20 & 0 \\ 0 & 20 \end{array}\right)$$
    and the conditions \eqref{adm} hold for this matrix.
    \begin{rmk} Observe that in \cite[Lemma 3.1, Prop. 3.3]{Kondo}, Kond\=o proves that the surface $X=Km(E_i\times E_i)$ is the unique surface admitting the action of a symplectic group $G$ such that $G_0\simeq M_{20}$ and $G/G_0\simeq \mathbb Z/4\mathbb Z$.
    In this case $T_X=\left(\begin{array}{cc} 4 & 0 \\ 0 & 4 \end{array}\right)$.
    The fact that we find here two different surfaces (with two different trascendental lattices) is not contradicting the unicity proven in \cite{Kondo}, since here we are not assuming the action of $G$ such that $G_0\simeq M_{20}$ and $G/G_0\simeq \mathbb Z/4\mathbb Z$.
    \end{rmk}
    
Let $X_{\mathrm{Ko}}$ be the surface with $T_{X_{\mathrm{Ko}}}=\left(\begin{array}{cc} 4 & 0 \\ 0 & 4 \end{array}\right)$
according to the notation of \cite[Section 3]{BS}, 
and let $X'$ be the K3 surface with transcendental lattice  $T_{X'}=\left(\begin{array}{cc} 20 & 0 \\ 0 & 20 \end{array}\right)$. 
We can observe that  $T_{X'} =5 T_{X_{\mathrm{Ko}}}$ and this relates to the fact that 
by \cite[Section 5]{Sm}, $X'$ is the Kummer surface $\mathrm{Km}\left(E_{5i}\times E_{5i}\right)$. 

\section{About the projective models}\label{projective_model}

In order to describe the projective models of K3 surfaces with an action of $M_{20}$, we want to understand when the linear system $\vert L\vert$ is hyperelliptic, i.e. when there exists a surface $S\subset\mathbb{P}^n$ such that the map $X\rightarrow S$ given by $\vert L\vert$ is $2:1$, and when $L$ defines an embedding. With the previous notations, we will show the following:

\begin{thm}\label{mainTHM}
Given the K3 surface $X$ with the embedding given by $L$ and $L^2=4n$, we have the following results:
\begin{enumerate}[1.]
    \item $L$ is an ample class;
    \item the linear system $\vert L\vert$ is not hyperelliptic;
    \item if $L^2\geq 8$, then the projective model is only defined by quadrics.
\end{enumerate}
\end{thm}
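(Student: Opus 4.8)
The plan is to reduce all three assertions to Saint-Donat's numerical criteria for complete linear systems on K3 surfaces \cite{SD}, and then to dispose of the remaining lattice-theoretic input using the $M_{20}$-action. Throughout I use that $\pic$ has signature $(1,19)$, that the $M_{20}$-invariant part of $\pic$ is the rank-one positive definite lattice $\mathbb{Z}L$ (it is $\LL\cap\pic$, the orthogonal complement of $\Tx$ in $\LL$), and hence that $N:=L^{\perp}\cap\pic$ is negative definite of rank $19$, with $\pic\otimes\mathbb{Q}=\mathbb{Q}L\oplus(N\otimes\mathbb{Q})$ an orthogonal decomposition.

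First I would prove assertion (1). Since $L^2=4n>0$, I may assume $L$ lies in the positive cone; by the Nakai--Moishezon characterization it suffices to show $L\cdot\Gamma>0$ for every smooth rational curve $\Gamma$ (every $(-2)$-curve). Given such a $\Gamma$, the averaged class $D:=\sum_{g\in M_{20}}g^{*}\Gamma$ is effective and $M_{20}$-invariant, hence $D=kL$ with $k\in\mathbb{Z}_{>0}$; using $g^{*}L=L$ one gets $|M_{20}|\,(L\cdot\Gamma)=L\cdot D=k\,L^2>0$, so $L\cdot\Gamma>0$ and $L$ is ample.

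Next, by Proposition~\ref{L/2+1} and \cite{SD}, assertions (2) and (3) reduce to excluding a short list of special configurations. Concretely, for $L$ ample with $L^2\ge4$: the system $\vert L\vert$ is base point free unless there is a class $E$ with $E^2=0$ and $L\cdot E=1$; the map $\varphi_L$ is non-hyperelliptic (hence, with base point freeness, an embedding) unless either there is $E$ with $E^2=0$ and $L\cdot E=2$, or $L^2=4$ with $L=2B$ and $B^2=1$; and the image is cut out by quadrics unless either there is $E$ with $E^2=0$ and $L\cdot E=3$ (the trigonal case) or there is $B$ with $B^2=2$ and $L=3B$ (the plane-quintic case). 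The two divisibility cases are immediate: if $L=mB$ in $\pic$ with $m\in\{2,3\}$, then $m(g^{*}B-B)=g^{*}L-L=0$ forces $g^{*}B=B$, so $B$ is invariant and lies in $\mathbb{Z}L$, whence $L=mB$ contradicts the primitivity of $L$. Thus everything rests on the single claim that there is no nef class $E$ with $E^2=0$ and $L\cdot E\in\{1,2,3\}$, and this is the step I expect to be the main obstacle.

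To attack the claim, I would suppose such an $E$ exists with $L\cdot E=\ell\le3$ and decompose $E=\tfrac{\ell}{4n}L+E'$ with $E'\in N\otimes\mathbb{Q}$. Orthogonality together with $E^2=0$ gives $-(E')^2=\ell^2/(4n)$, the primitive integral vector $v:=4nE-\ell L\in N$ satisfies $v^2=-4n\ell^2$, and the effective invariant class $\sum_{g}g^{*}E$ equals $\tfrac{240\ell}{n}L$, forcing $n\mid240\ell$. It then remains to show that these constraints are incompatible with $\ell\le3$: here I would use the explicit discriminant form of $N$ — equivalently of the coinvariant lattice of $M_{20}$, read off from $\LL$ and from the transcendental lattices computed in Sections~\ref{L^2=12} and~\ref{other_cases} — to verify that the gluing between $\mathbb{Z}L$ and $N$ inside $\pic$ cannot produce a vector $v$ of the required norm and congruence class for such small $\ell$. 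The delicate point, and the technical heart of the argument, is precisely this compatibility analysis between the divisibility of $L$ in $\pic$ and the discriminant data; once it is settled, the three assertions follow at once from the Saint-Donat dichotomy.
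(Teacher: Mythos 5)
Your overall strategy coincides with the paper's: establish ampleness, then invoke Saint-Donat's criteria and rule out the exceptional configurations. Your averaging argument for ampleness (pushing a $(-2)$-curve $\Gamma$ to the invariant effective class $\sum_{g}g^{*}\Gamma=kL$) is correct and is a clean alternative to the paper's, which instead writes $C=(\alpha L+v)/\eta$ with $v\in\LL^{\perp}$ and uses that $\LL^{\perp}$ contains no $(-2)$-classes. Your reduction of (2) and (3) to the nonexistence of a class $E$ with $E^2=0$ and $L\cdot E\in\{1,2,3\}$, plus divisibility cases, also matches the paper, modulo two slips in recalling Saint-Donat: the hyperelliptic exception is $L=2B$ with $B^2=2$, hence $L^2=8$ (your $B^2=1$ is impossible since the intersection form is even), and the quadric exception is $L=2B+\Gamma$ with $(2B+\Gamma)^2=10$, not $L=3B$; both are still excluded for trivial numerical reasons, so these are recoverable.

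The genuine gap is exactly where you flag it: you never carry out the exclusion of the classes $E$ with $E^2=0$ and $L\cdot E=\ell\le 3$. The constraint $n\mid 240\ell$ you extract from averaging is far too weak to conclude anything, and the ``compatibility analysis between the divisibility of $L$ in $\pic$ and the discriminant data'' that you defer is the entire content of the corresponding subsections of Section \ref{projective_model}. The paper's mechanism is concrete: write $E=\frac{\alpha L+v}{\eta}$ with $v\in\LL^{\perp}$ and $\alpha,\eta>0$; the condition $L\cdot E=\ell$ forces $\eta=4n\alpha/\ell$; since $\eta E\in\mathbb{Z}L\oplus\LL^{\perp}$, the integer $\eta$ must divide the index $I=[\mathrm{NS}(X):\mathbb{Z}L\oplus\LL^{\perp}]$, whose square equals $160n/(4ac-b^2)$ by the determinant formula. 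This yields $10\ell^2=n\alpha^2(4ac-b^2)m$ for some positive integer $m$, hence $n\mid 10$, $n\mid 40$, $n\mid 90$ for $\ell=1,2,3$ respectively, and the finitely many surviving values of $n$ are then eliminated one by one against the explicit transcendental lattices tabulated in Appendix \ref{tab:T_X} (plus, for the imprimitive polarizations $L=2M$, the observation that $E\cdot L=2$ would force $E\cdot M=1$, which Saint-Donat forbids). Without this index computation and the ensuing finite case check, assertions (2) and (3) remain unproved.
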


Before we start proving the Theorem, we introduce the following two Lemmas which will be very useful for the following Sections.

\begin{lemma}\label{index1}
Let $L$ be a primitive ample class which is $M_{20}$-invariant on the K3 surface $X$. We assume that there exists a curve $C$ such that $C\cdot L=k\neq 0$. Then $4n$ divides $kI$, where $I:=\left[\mathrm{NS}(X):\mathbb{Z}L\oplus\LL^{\perp}\right]$, and $n$ divides $10k^2$.
\end{lemma}

\begin{proof}
    First we will show that $4n$ divides $kI$. Note that $(k\Z+4n\Z)/4n\Z$ is a subquotient of $\pic/\left(\Z L\oplus\LL^{\perp}\right)$. In fact the map
    $$\begin{array}{cccl}
        \pic & \longrightarrow & \pic\cdot L & \subset\Z \\
         v & \longmapsto & v\cdot L &
    \end{array}$$
    induces a surjection 
    $$\quotient{\pic}{\Z L\oplus\LL^{\perp}}\twoheadrightarrow\quotient{\pic\cdot L}{4n\Z}\, .$$
    By assumption there exists a curve $C$ such that $L\cdot C=k$, so $\pic\cdot L\supset k\Z+4n\Z$. Thus $I':=\left[k\Z+4n\Z:4n\Z\right]$ divides $I$ and, since $I'=\frac{4n}{\gcd\{k,4n\}}\in\frac{4n}{k}\Z$, we obtain $I\in\frac{4n}{k}\Z$. Hence $4n$ divides $kI$. \\

    Now we can deduce that $n$ divides $10k^2$. By \cite[Chapter I, Lemma 2.1]{BHPV} we have
    $$I^2=\frac{\det(\Z L)\cdot\det\left(\LL^{\perp}\right)}{\det\pic}=\frac{640n}{\det\Tx}\, .$$
    Since $\frac{kI}{4n}$ is an integer, so is its square
    $$\frac{k^2I^2}{16n^2}=\frac{10k^2}{n\left(\det\Tx/4\right)}.$$

    Note that $\det\Tx=4\left(4ac-b^2\right)$ is always divisible by $4$, thus we get the Lemma.
\end{proof}

As a consequence we have the following result.

\begin{lemma}\label{index2}
    Under the assumptions of Lemma \ref{index1}, if we assume $k\in\{1,2,3\}$, then $(n,k)\in\{(1,2),(2,2),(10,2)\}$, and $X$ is one of the examples studied in \cite{BS} (see Proposition \ref{3K3}).
\end{lemma}

\begin{proof}
Suppose that $k\in\{1,2,3\}$,  we see what happens in each case.
\begin{itemize}
    \item\textbf{Case $k=1$}: by Lemma \ref{index1} $n$ divides $10$, i.e. $n\in\{1,2,5,10\}$. Moreover $4n$ should divide $I$, but if we look at Table \ref{tab_complete} in Appendix \ref{tab:T_X} we see that it is impossible for each of the four cases.
    \item\textbf{Case $k=2$}: by Lemma \ref{index1} $n$ divides $40$, i.e. $n\in\{1,2,4,5,8,10,20,40\}$. Moreover $4n$ should divide $2I$, and if we look at Table \ref{tab_complete} in Appendix \ref{tab:T_X} we see that it is possible only for $n=1$, $n=2$ and $n=10$ with $X_{\mathrm{Ko}}$.
    \item\textbf{Case $k=3$}: by Lemma \ref{index1} $n$ divides $90$, i.e. $n\in\{1,2,3,5,6,9,10,15,18,30,45,90\}$. Moreover $4n$ should divide $3I$, but if we look at Table \ref{tab_complete} in Appendix \ref{tab:T_X} we see that it is impossible.
\end{itemize}
\end{proof}

For the following, we assume that $L$ is $M_{20}$-invariant and $L^2=4n$, with $n\in\Z_{\geq 1}$.
The proof of Theorem \ref{mainTHM} will be done in the following four Sections.

\subsection{$L$ is ample}
First, observe that we can suppose $L$ to be an effective class. In fact by Riemann-Roch Theorem \cite[Section V, Theorem 1.6]{hart}, we have for any divisor $D$ on a K3 surface:

$$h^0(D)+h^0(-D)=2+\frac{D^2}{2}+h^1(D)$$

where $h^i(D)=\dim\mathrm{H}^i(D)$, for $i\geq 0$. In our case $L^2=4n>0$, so the sum $h^0(L)+h^0(-L)$ is strictly positive. By definition, $h^0(L)$ is strictly positive if and only if $L$ is an effective class. Hence $L$ or $-L$ is effective and so we can suppose $L$ to be effective.

Since $\mathrm{NS}(X)\supset\mathbb{Z}L\oplus\LL^{\perp}$, we can consider $C\in\mathrm{NS}(X)$ a $(-2)$-curve such that
$$C=\frac{\alpha L+v}{\eta}$$
where $\alpha$ and $\eta\neq 0$ are two integers and $v\in\LL^{\perp}$. Moreover $\alpha\neq\ 0$ because $\LL^{\perp}$ does not contain any $(-2)$ by the point b) of \cite[Lemma 4.2]{Nikulin}. 

We can be more precise about the integers $\alpha$ and $\eta$:
\begin{itemize}
    \item $\eta$ is positive because $\frac{L}{\eta}$ is in the discriminant group.
    \item $L$ is effective and $C$ is a curve, so $L\cdot C\geq 0$. Thus $\frac{\alpha}{\eta}\geq 0$ and $\alpha$ is also positive.
\end{itemize}

So we obtain that $\frac{\alpha}{\eta}>0$. Hence $L\cdot C>0$, which implies by Nakai-Moishezon criterion (see \cite[Chapter V, Theorem 1.10]{hart}) that $L$ is an ample class.

\subsection{$\vert L\vert$ has no fixed part}
By \cite[Section 3.8]{reid}, either $\vert L\vert$ has no fixed part or  $L=aE+\Gamma$, with $a$ a positive integer, $E$ an elliptic curve and $\Gamma$ an irreducible $(-2)$-curve such that $E\cdot\Gamma=1$. Suppose that we are in the second case. Then $E\cdot L=E\cdot\Gamma=1$ and by Lemma \ref{index2} we can conclude that the linear system $\vert L\vert$ has no fixed part.

\subsection{$\vert L\vert$ is not hyperelliptic} 
By \cite[Theorem 5.2]{SD} $L$ is hyperelliptic only in the two following cases:
\begin{itemize}
    \item There exists an irreducible curve $E$ of genus 1 such that $E\cdot L=2$.
    \item There exists an irreducible curve $B$ of genus 2 such that $L=2B$.
\end{itemize}

First, assume that there exists an irreducible curve $B$ of genus $2$ such that $L=2B$. Remark that $B^2=2$, so $L^2=4B^2=8$. However, we already know that in this case the model is not hyperelliptic by \cite{BS}.

Now suppose that there exists an irreducible curve $E\in\mathrm{NS}(X)$ of genus $1$ such that $E\cdot L=2$. By Lemma \ref{index2} we know that $n\in\{1,2,10\}$ and then $X$ corresponds to one of the three surfaces $X_\mathrm{Mu}$, $X_{\mathrm{BH}}$ or $X_{\mathrm{Ko}}$ already studied (see Proposition \ref{3K3}).

\begin{rmk}
We observe that by \cite[\S 4.1]{SD}, since $|L|$ is not hyperelliptic and has no fixed part, it is very ample.
\end{rmk}

\subsection{Quadrics on the surface}\label{quadrics}
We now prove the last part of Theorem \ref{mainTHM}, i.e. that assuming $L^2\geq 8$, the surface is  defined by an intersection of quadrics. By \cite[Theorem 7.2]{SD} this is true except in the two following cases:

\begin{itemize}
\item there exists an irreducible curve $E$ of genus 1 with $E\cdot L=3$, or
\item $L=2B + F$ where $B$ is an irreducible curve of genus 2 and $F$ is an irreducible rational curve such that $B\cdot F=1$.
\end{itemize}

By Lemma \ref{index2}, the first point is impossible. The second point is not possible either, since $\left(2B+F\right)^2=10\neq 4n$. \\

This concludes the proof of Theorem \ref{mainTHM}. We can be more precise and compute the exact number $Q_{L^2}$ of quadrics which define the K3 surface associated to $L$. 

\begin{prop}\label{number}
Let $X\subset\mathbb P^N$ with the embedding $L$ such that $L^2=4n\geq 8$. The number of quadrics defining $X$ in $\mathbb P^N$ is $$Q_{4n}=2n^2-3n+1.$$
\end{prop}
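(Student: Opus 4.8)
The plan is to realize the number of quadrics through $X$ as the dimension of the kernel of the restriction map
$$r_2\colon H^0\bigl(\mathbb{P}^N,\mathcal{O}_{\mathbb{P}^N}(2)\bigr)\longrightarrow H^0(X,2L),$$
and to evaluate both spaces by Riemann--Roch. By Proposition \ref{L/2+1} we have $N=2n+1$, so the source has dimension $\binom{N+2}{2}=\binom{2n+3}{2}=2n^2+5n+3$. For the target, Riemann--Roch on the K3 surface $X$ gives $\chi(2L)=2+\frac{1}{2}(2L)^2=2+8n$; since $2L$ is ample by Theorem \ref{mainTHM}(1), the higher cohomology vanishes ($h^1(2L)=h^2(2L)=0$), whence $h^0(2L)=2+8n$.

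The key remaining step is the surjectivity of $r_2$, which is exactly the quadratic normality of the embedding. Because $|L|$ is base point free and not hyperelliptic by Theorem \ref{mainTHM}(2), the Saint-Donat normality results \cite{SD} show that the projective model of $X$ is projectively normal, so $H^0(\mathbb{P}^N,\mathcal{O}(k))\to H^0(X,kL)$ is surjective for every $k\ge 0$; equivalently, the multiplication map $\mathrm{Sym}^2 H^0(X,L)\to H^0(X,2L)$ is onto. Granting this, the dimension of the degree-$2$ part of the homogeneous ideal of $X$ is
$$Q_{4n}=\dim H^0\bigl(\mathbb{P}^N,\mathcal{O}(2)\bigr)-h^0(2L)=(2n^2+5n+3)-(2+8n)=2n^2-3n+1.$$
Finally, since the last part of Theorem \ref{mainTHM} shows that the ideal of $X$ is generated in degree $2$, these $Q_{4n}$ quadrics already cut out $X$ scheme-theoretically, so they are genuinely the defining equations and not merely the quadratic piece of a larger ideal.

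I expect the only delicate point to be the invocation of projective (quadratic) normality: the rest is Riemann--Roch bookkeeping. One must check that the precise hypotheses of the Saint-Donat normality theorem are in force, namely base point freeness and non-hyperellipticity of $|L|$, both established in Theorem \ref{mainTHM}. As a consistency check the formula reproduces the low cases treated in \cite{BS}: $Q_4=0$ for the quartic $X_{\mathrm{Mu}}\subset\mathbb{P}^3$ and $Q_8=3$ for the complete intersection $X_{\mathrm{BH}}$ of three quadrics in $\mathbb{P}^5$ of Proposition \ref{3K3}, and it gives $Q_{12}=10$, matching the $10$-dimensional space of quadrics $F_{10}$ found in Section \ref{modelX12}.
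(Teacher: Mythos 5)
Your proposal is correct and follows essentially the same route as the paper: both compute $Q_{4n}$ as $\dim S^2H^0(L)-h^0(2L)=\binom{2n+3}{2}-(2+8n)$, with $h^0(2L)$ given by Riemann--Roch. The only difference is that you spell out the surjectivity of the restriction map via Saint-Donat's projective normality results, a step the paper delegates to the citation of \cite[Section 6.5.3]{SD}.
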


\begin{proof}
Suppose that we have $X\subset\mathbb P^N$ with the embedding determined by $L$ such that $L^2=4n$. We know that $N=2n+1$ by Proposition \ref{L/2+1}. Following \cite[Section 6.5.3]{SD}, one can compute the number of quadrics defining the surface as
 $$\dim S^2H^0(L)-\dim H^0(2L),$$
where $\dim S^2H^0(L)$ is the total number of quadrics in the projective space $\mathbb P^{2n+1}$,  that is $\binom{2n+3}{2}$. By Riemann Roch theorem \cite[Section V, Theorem 1.6]{hart} we can compute $\dim H^0(2L)=2+8n$. Finally, we conclude that the number of quadrics defining $X$ is
$$Q_{4n}:=\binom{2n+3}{2}-(2+8n)=2n^2-3n+1.$$
\end{proof}

\section{Polarizations which are not primitives}\label{prim}

With the computations of $T_X$ (see Appendix \ref{tab:T_X}), one could remark that there are similarities between some cases. For example, we get the same lattice $T_X$ when $L^2=4$ and $L^2=16$, as well as in cases $L^2=8$ and $L^2=32$. 
Also, there is a relation between these embeddings. Actually, we have the following result:

\begin{prop}\label{2L}
Given the embedding $\langle 4n\rangle\hookrightarrow\LL$, then, for all integers $r\geq 1$, there exists an embedding $\langle r^2 4n\rangle\hookrightarrow\LL$ and such that the lattice $\Tx$ obtained for $L$ is the same as the one for $rL$.
\end{prop}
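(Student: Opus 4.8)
The plan is to prove Proposition \ref{2L} by a direct, elementary argument about the lattice $\LL$ and its primitive vectors. The statement asserts two things: first, that an embedding $\langle 4n\rangle\hookrightarrow\LL$ gives rise to an embedding $\langle r^2\cdot 4n\rangle\hookrightarrow\LL$ for every $r\geq 1$, and second, that the transcendental lattice $\Tx$ attached to the polarization $L$ coincides with the one attached to $rL$. The key observation is that an embedding $\langle 4n\rangle\hookrightarrow\LL$ is nothing but the choice of a vector $L\in\LL$ with $L^2=4n$, and scaling this vector by the integer $r$ produces a new vector $rL\in\LL$ with $(rL)^2=r^2 L^2=r^2\cdot 4n$. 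This immediately yields the claimed embedding $\langle r^2 4n\rangle\hookrightarrow\LL$ sending a generator to $rL$, so the first assertion requires no real work.

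The substance of the proposition is the statement about transcendental lattices, and here I would argue that the orthogonal complement is insensitive to scaling the vector. First I would recall from Section \ref{sec-background} that $\Tx=L^{\perp}\cap\LL$, the orthogonal complement of the line spanned by $L$ inside $\LL$. The crucial step is then the set-theoretic identity
$$(rL)^{\perp}\cap\LL=L^{\perp}\cap\LL,$$
which holds because for any $x\in\LL$ one has $\langle x,rL\rangle=r\langle x,L\rangle$, and since $r\neq 0$ this vanishes if and only if $\langle x,L\rangle=0$. Thus the set of lattice vectors orthogonal to $rL$ is exactly the set of lattice vectors orthogonal to $L$, and these two orthogonal complements are equal as sublattices of $\LL$, not merely isometric. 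Consequently the transcendental lattice obtained from the polarization $rL$ is literally the same lattice as the one obtained from $L$, which is a stronger conclusion than mere isometry and certainly implies the claim.

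One point that deserves care, and which I would address explicitly, is the compatibility of this scaling with the admissibility conditions and with the role of $L$ as a polarization. When passing from $L$ to $rL$, the vector $rL$ is no longer primitive in $\LL$ (it is divisible by $r$), which is precisely the phenomenon announced in the section title ``Polarizations which are not primitives''. I would note that the orthogonal complement computation above does not require primitivity of the vector, so the transcendental lattice is well defined and unchanged regardless. The main subtlety is therefore conceptual rather than computational: one must be clear that $\Tx$ depends only on the \emph{line} $\mathbb{Q}L\cap\LL$ spanned by $L$, and both $L$ and $rL$ span the same rational line. I expect this to be the only genuine obstacle, and it is a mild one; the rest of the argument is a one-line verification. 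To make the statement fully precise I would also remark that the explicit coincidences observed in Appendix \ref{tab:T_X}, namely $L^2=4$ with $L^2=16$ (taking $r=2$) and $L^2=8$ with $L^2=32$ (again $r=2$), are instances of this proposition with $4n=4$ and $4n=8$ respectively.
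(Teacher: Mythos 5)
Your proposal is correct and matches the paper's own argument: the paper likewise sends a generator to $rL$ (written in the coordinates $\lambda_0 e+\mu_0 f+\delta_0 h$ and verified via the explicit quadratic form), and establishes the equality of transcendental lattices by observing that $\langle \lambda e+\mu f+\delta h, rL\rangle=0$ if and only if $\langle \lambda e+\mu f+\delta h, L\rangle=0$, which is exactly your orthogonal-complement identity. Your additional remarks on non-primitivity and dependence only on the rational line are sensible clarifications but do not change the substance.
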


\begin{proof}
For the existence of the embedding $\langle r^24n\rangle\hookrightarrow\LL$, we can assume that there exists $\lambda_0,\mu_0,\delta_0\in\mathbb{Z}$ such that $L\mapsto\lambda_0 e+\mu_0 f+\delta_0 h$. 
Hence $rL\mapsto \left(r\lambda_0\right) e+\left(r\mu_0\right) f+\left(r\delta_0\right) h$ is the desired embedding.
The transcendental lattice $\Tx$ is obviously the same. 
\end{proof}

The previous proposition allows to describe explicitly some non-primitive cases, i.e. cases where the polarization is multiple of some other polarization $L$. 

Throughout this section, $X_{4n}$ will denote the K3 surface admitting the polarization $L$ with $L^2=4n$. 
Thanks to Proposition \ref{quadrics}, we are able to compute the number of quadrics defining $X_{4n}$. In  cases which are not primitive, we can be more precise and give explicitly the equations of the quadrics defining the surface. To do this, we first recall the Veronese embedding of degree $d$, see \cite{harris}:

$$\begin{array}{cccc}
\nu_d^n : & \mathbb{P}^n & \longrightarrow & \mathbb{P}^m \\
 & \left(x_0:\dots :x_n\right) & \longmapsto & \left(x_0^d:x_1^d:\dots:x_n^d:x_0^{d-1}x_1:x_0^{d-1}x_2:\dots\right) \\
\end{array}\, .$$

Since we use monomials of degree $d$ in $n$ variables, $m=\binom{n+d}{d}-1$.

\subsection{Cases $L^2=4\cdot(4n)$}
We will first show explicitly some examples and then state the general property.

\begin{example}[Case $L^2=16$]
To study the case $L^2=16$, we first consider the surface $X_{\mathrm{Mu}}$ with the polarisation given by $M$, with $M^2=4$.  By Proposition \ref{3K3}, $X_{\mathrm{Mu}}$ is the zero locus of a quartic in $\mathbb P^3$ whose equation is given in \eqref{eq-mukai}. 

Now let $L=2M$ and we consider the Veronese embedding of degree 2:
$$\nu_2^3:\mathbb P^3\rightarrow \mathbb P^9.$$

Let $(y_0:\ldots:y_9)$ be the coordinates of $\mathbb P^9$. The image of the equation defining $X_{\rm Mu}$ in $\mathbb P^9$ via $\nu_2^3$ is given by
\begin{equation}
y_0^2+y_1^2+y_2^2+y_3^2-6(y_4^2+y_5^2+y_6^2+y_7^2+y_8^2+y_9^2)
\label{eq2}
\end{equation}

The image $\nu_2^3(\mathbb P^3)$ is defined by the quadrics given by the zero loci of the $2\times 2$ minors of the matrix
$$\mathcal{A}:=\left(\begin{array}{cccc}
     y_0& y_4&y_5&y_6 \\
     y_4& y_1&y_7&y_8 \\
     y_5& y_7&y_2&y_9 \\
     y_6& y_8&y_9&y_3 \\
\end{array}\right)\, .$$

This gives 20 equations of quadrics. Let us consider the K3 surface $X_{16}$ in $\mathbb P^9$ given by the 20 quadrics obtained as minors of the matrix $\mathcal{A}$ plus the quadric defined by \eqref{eq2}. The number of quadrics is $Q_{16}=21$, as desired.
As observed in Proposition \ref{2L}, the surface $X_{16}$ admits a polarization $L$ such that $L^2=4M^2=16$ and it inherits the action of $M_{20}$, as well as the action of $\mu_2$ described in \cite[Section 4]{BS}.\end{example}

\begin{example}[Case $L^2=64$]
We have previously shown a model of $X_{16}$ in $\mathbb P^9$ defined by $Q_{16}=21$ quadrics. In order to exhibit a projective model of a K3 surface $X_{64}$ with a polarization $L$ with $L^2=64$, we consider the Veronese embedding $\nu_2^{9}:\mathbb P^{9}\rightarrow \mathbb P^{54}$. The 21 quadrics defining $X_{16}\subseteq\mathbb P^9$ give 21 hyperplanes in $\mathbb P^{54}$, thus their intersection determines a 33-dimensional space. The desired surface $X_{64}\subset\mathbb P^{33}$ is given by the restriction of the quadrics defining $\nu_{2}^{9}(\mathbb P^9)$ to this 33-dimensional space.\end{example}

\begin{example}[Case $L^2=32$]
By \cite[Section 5]{BS} the K3 surface $X_{\rm BH}$ admits a polarization $M$ such that $M^2=8$. Taking $L=2M$, we obtain a polarization with $L^2=32$ on the same surface. We would like to describe its projective model which is in $\mathbb P^{17}$
by Proposition \ref{L/2+1}. 
By Proposition \ref{3K3}, case 3., $X_{\rm BH}$ is the intersection of 3 quadrics in $\mathbb P^5$.

In order to show the projective model of $X_{\rm BH}$ in $\mathbb P^{17}$ we consider the Veronese embedding 
$\nu_2^5:\mathbb P^5\rightarrow \mathbb P^{20}$. The image of $\mathbb P^5$ via $\nu_2^5$ is the zero locus of quadrics and their number is 
$$\frac12\left(\binom62^2+\binom62\right)-\binom{6}{4}=105\ .$$
The last term $\binom{6}{4}$ considers the Pl\"ucker relations.
The images of $q_1$, $q_2$ and $q_3$ via $\nu_2^5$ give three linear equations, thus $q_1,q_2,q_3$ define 3 hyperplanes in $\mathbb P^{20}$. 
Their intersection is a $17$-dimensional space and in this projective space of dimension 17 the equation of $X$ is given by the restrictions of the $105$ equations defining $\nu_2^5(\mathbb P^5)$.
\end{example}

In general let $M^2=4n$. Recall that the number of quadrics defining $X_{4n}$ by Proposition \ref{number} is 
$$Q_{4n}=2n^2-3n+1.$$

If we consider $L=2M$, therefore $L^2=4M^2=16n$ and we expect to find a projective model of $X_{16n}$ in $\mathbb P^{8n+1}$ by Proposition \ref{L/2+1}. We consider the Veronese embedding of degree 2
$$\nu_2^{2n+1}:\mathbb{P}^{2n+1}\hookrightarrow\mathbb{P}^{2n^2+5n+2}\, .$$

The $Q_{4n}$ quadrics defining $X_{4n}$ give via $\nu_2$ a set of independent hyperplanes in $\mathbb P^{2n^2+5n+2}$, whose number is $Q_{4n}=2n^2-3n+1$.  
Thus we obtain a subspace of dimension $$(2n^2+5n+2)-(2n^2-3n+1)=8n+1$$
as expected by Proposition \ref{L/2+1}. 
The surface $X_{16n}$ is given by the restriction of the quadrics defining $\nu_2(\mathbb P^{2n+1})$ to this space of dimension $8n+1$.

Observe that this argument works for any value of $M^2=4n$. We showed explicitly what happens when $M^2=4$, $M^2=16$ and $M^2=8$ in the previous examples.

\subsection{Cases $L^2=4r^2$} Another interesting case is when we take $L=rM$ with $M^2=4$.
Let $X_{\rm Mu}$ be the K3 surfaces defined in Proposition \ref{3K3}, with $M^2=4$, whose projective model is given by the zero locus of a polynomial $f_4$ of degree 4 in  $\mathbb P^3$. We show how to obtain the projective model of the surface with embedding $L=rM$, thus having $L^2=4r^2$.

\begin{example}[Case $L^2=36$, $r=3$] We expect a model of $X_{36}$ in $\mathbb P^{19}$ by Propositon \ref{L/2+1}. 
According to \cite[Example 2.4]{harris}, one can prove $X_{\rm Mu}=\{f_4=0\}$ can also be defined as the intersection
of the zero loci of \[q_2(x_0,\ldots,x_3)f_4(x_0,\ldots,x_3)=0\] where $q_2(x_0,\ldots,x_3)$ is one of the 10 elements of the basis of polynomials of degree 2 in $(x_0,\ldots,x_3)$, i.e.:
\[x_0^2,\ x_1^2,\ x_2^2,\ x_3^2,\ldots ,x_2x_3.\]
Thus $X_{\rm Mu}$ is defined as the intersection of the zero loci of 10 polynomials $g_1,\ldots,g_{10}$ of degree 6.
Let $\nu_3^3:\mathbb P^3\rightarrow \mathbb P^{19}$ be the Veronese embedding of degree 3. The image of each $g_i$ is a polynomial of degree 2 in $\mathbb P^{19}$.  We thus obtain $X_{36}$ as the intersection of the 10 quadrics in $\mathbb P^{19}$ given by the $g_i$'s and the quadrics defining the image of the Veronese embedding.
\end{example}

\begin{example}[Case $L^2=100$, $r=5$] If $r=5$ and we consider $L=5M$ with $M^2=4$, then $L^2=100$.
Let $f_4$ be the polynomial of degree 4 defining the surface $X_{\rm Mu}$. The zero locus $\{f_4=0\}$ can also be described by the intersection of the zero loci of
\begin{equation}\label{hyperplanes}
x_0f_4=x_1f_4=x_2f_4=x_3f_4=0. 
\end{equation}

Via the Veronese embedding $\nu^3_5:\mathbb P^3\rightarrow \mathbb P^{\binom85-1}=\mathbb P^{55}$ the surface $X_{\rm Mu}$ has as image a K3 surface admitting a polarisation $L=5M$. The equations \eqref{hyperplanes} define 4 hyperplanes in $\mathbb P^{55}$, thus the model of $X_{100}$ is contained in $\mathbb P^{51}$.
\end{example}

More generally, for any $r>3$, via the Veronese embedding $\nu^3_r:\mathbb P^3\rightarrow \mathbb P^{\binom{r+3}3-1}$ one obtains a K3 surface admitting a polarization $L=rM$ and thus $L^2=4r^2$. 
This surface is obtained as the image of $X_{\rm Mu}$ via $\nu^3_r$. As observed, the surface $X_{\rm Mu}$ can be described by the intersection
of the zero loci of
\begin{equation}\label{hyp_gen}
q_1f_4=\ldots=q_sf_4=0
\end{equation}

with $s=\binom{r-1}{3}$ and the $q_i$'s are monomials of degree $r-4$. Equations \eqref{hyp_gen} define $\binom{r-1}{3}$ hyperplanes in $\mathbb P^{\binom{r+3}3-1}$ thus one obtains a model for the K3 surface $X_{4r^2}$ in a space of dimension \[\binom{r+3}3-1-\binom{r-1}{3}=2r^2+1\]
as expected.

\section{Existence of infinitely many K3 surfaces with an action of $M_{20}$}\label{infinity}

It is known by \cite[Propostion 2.1]{BS} that there are at most countably many K3 surfaces with a symplectic and faithful action of $M_{20}$.
It is interesting to ask whether there exists an infinite number of such K3 surfaces.
According to previous Sections, one can consider the polarization $L^2=4p$ with $p$ a prime. If equation \eqref{eq_L^2} admits a solution for $L^2=4p$, one can repeat the arguments used before and by Theorem \ref{T_X} one obtains a K3 surface with the action of $M_{20}$. Moreover, since $p$ is a prime, this surface is a new one, meaning that it does not admit a polarization $L^2=4m$ with $m<p$. 

\begin{thm}\label{infinite}
There exist infinitely many K3 surfaces admitting the action of $M_{20}$.
\end{thm}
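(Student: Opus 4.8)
The plan is to reduce the statement to a number-theoretic fact that has essentially already been proved in \Cref{prop_no_existence}, and then combine it with the uniqueness of the transcendental lattice from \Cref{T_X}. First I would recall that by \Cref{prop_no_existence}, the equation \eqref{eq_L^2} admits a solution $(\lambda,\mu,\delta)$ precisely when $L^2=4n$ with $n$ not of the form $4^i(16j+6)$. To produce infinitely many \emph{distinct} surfaces, it suffices to produce infinitely many admissible values of $n$ that force genuinely different surfaces, and the cleanest way to guarantee distinctness is to restrict attention to $n=p$ a prime: a prime admits no nontrivial factorization as $r^2 m$ with $r\geq 2$, so by \Cref{2L} the resulting polarization cannot be obtained as a multiple $rM$ of a smaller polarization, and the surface is therefore new in the sense that it does not already carry a polarization $L^2=4m$ with $m<p$ arising from the same construction.

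The key step is then to show that the Ramanujan form $x^2+y^2+10z^2$ represents $4p$ for infinitely many primes $p$, equivalently that infinitely many primes $p$ avoid the excluded congruence class. By \Cref{prop_no_existence} the only obstruction is $n=4^i(16j+6)$; for $n=p$ an odd prime this can never hold, since $4^i(16j+6)$ is always even and, when $i=0$, equals $16j+6\equiv 6\pmod{16}$, which is divisible by $2$ but not $4$ and so is never an odd prime either. Hence \emph{every} odd prime $p$ is admissible, and for each such $p$ the equation $4p=(2\lambda-\delta)^2+(2\mu-\delta)^2+10\delta^2$ has an integral solution. This yields a primitive embedding $\langle 4p\rangle\hookrightarrow\LL$, and as in \Cref{L^2=12} and \Cref{other_cases} the orthogonal complement $T_X=L^\perp\cap\LL$ is a rank-two even positive-definite lattice satisfying the admissibility conditions \eqref{adm}.

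Next I would invoke \Cref{T_X}: the resulting class $[T_X]\in\quotient{\mathcal{Q}}{\mathrm{SL}_2(\mathbb{Z})}$ corresponds bijectively to a singular K3 surface $X$ with $T_X$ as transcendental lattice, and by the discussion in \Cref{sec-background} this $X$ carries the invariant lattice $\LL\simeq\mathrm{H}^2(X,\mathbb{Z})^{M_{20}}$ and hence a faithful symplectic action of $M_{20}$. To conclude that infinitely many \emph{non-isomorphic} surfaces arise, I would argue that the primitivity forced by $p$ prime prevents collapsing: if two primes $p\neq q$ gave isomorphic surfaces, the surface would carry both an $M_{20}$-invariant primitive class of square $4p$ and one of square $4q$, and by \Cref{2L} neither arises from the other as a multiple, so the associated discriminants $|\det T_X|=4(4ac-b^2)$ grow without bound with $p$ and distinguish infinitely many distinct classes $[T_X]$.

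The main obstacle I anticipate is not the arithmetic—that every odd prime is admissible is immediate once the excluded form is analyzed—but rather the bookkeeping needed to certify genuine distinctness of the surfaces. One must argue carefully that the transcendental lattices obtained for distinct primes land in distinct $\mathrm{SL}_2(\mathbb{Z})$-orbits; the discriminant $d=4ac-b^2$ is an $\mathrm{SL}_2(\mathbb{Z})$-invariant, so it suffices to check that it takes infinitely many values as $p$ ranges over the primes, which follows because $d$ is forced to grow with $L^2=4p$ under the admissibility normalization. This, together with \Cref{T_X}, gives infinitely many pairwise non-isomorphic singular K3 surfaces each admitting a faithful symplectic action of $M_{20}$.
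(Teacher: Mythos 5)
Your proof is correct, but its arithmetic core takes a genuinely different route from the paper's. The paper does not reuse Proposition \ref{prop_no_existence} here: it restricts to primes $p\equiv 1\pmod 4$, sets $\delta=0$ so that equation \eqref{eq_L^2_primes} reduces to $p=\lambda^2+\mu^2$, solves this by Fermat's two-squares theorem, and then invokes Dirichlet's theorem on primes in arithmetic progressions to get infinitely many such $p$. You instead observe that the excluded set $4^i(16j+6)$ consists only of even integers, so \emph{every} odd prime is admissible; this is cleaner, covers more primes, and replaces Dirichlet by Euclid, at the cost of leaning on the Ramanujan ternary-form result already encapsulated in Proposition \ref{prop_no_existence}. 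The treatments of distinctness also differ: the paper simply asserts, in the paragraph preceding the theorem, that primality of $p$ makes the surface ``new'' (no polarization $L^2=4m$ with $m<p$), which really only excludes the non-primitive coincidences of Proposition \ref{2L}, whereas your argument via the $\mathrm{SL}_2(\mathbb{Z})$-invariant discriminant of $\Tx$ together with Theorem \ref{T_X} is more robust. Your one under-justified step is the claim that $4ac-b^2$ ``is forced to grow'': the reduced-form conditions \eqref{adm} alone do not force this, but the index formula $I^2=160p/(4ac-b^2)$ from Section \ref{projective_model} does, since for $p>5$ the only square divisors of $160p=2^5\cdot 5\cdot p$ are $1$, $4$ and $16$, whence $4ac-b^2\geq 10p\to\infty$. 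With that supplement your distinctness argument is complete. (Both you and the paper pass over the same subtlety, namely that the singular K3 surface produced by Theorem \ref{T_X} actually carries the $M_{20}$-action; that step is inherited from the earlier sections in both cases.)
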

\begin{proof}
The proof will be done in two steps. First we will show that there exists infinitely many K3 surfaces with the previous assumptions. Then we will show that these surfaces admit an action of $M_{20}$ by using \cite[Lemma 8.24]{Kondobook}.

In order to prove that there exists infinitely many such K3 surfaces, it suffices to show that there are infinite number of primes $p$ such that the polarization $L^2=4p$ defines an embedding and thus a K3 surface. 
In order to answer this, we need to check that there is an infinite number of primes $p$ such that the equation 
\begin{equation}\label{eq_L^2_primes}
4p=(2\lambda -\delta)^2+(2\mu -\delta)^2+10\delta^2 .
\end{equation}
 admits a solution $(\lambda, \mu, \delta)$.

We observe that if $p\equiv 1\ (4)$, one can take $\delta=0$. The equation becomes
\[p=\lambda^2+\mu^2\]
and Fermat's theorem of two squares \cite[Chapter 1.1]{Primes} ensures the existence of two integers $\lambda, \mu$.

By the weak form of Dirichlet theorem, there exists infinitely many primes congruent to $1$ modulo $4$.
Then there exists infinitely many primes such that the equation \eqref{eq_L^2_primes} admits a solution $(\bar\lambda,\bar\mu,0)$. It follows that there is an infinite number of K3 surfaces with an action of $M_{20}$. \\

It remains to prove that the group $M_{20}$ acts on these surfaces. 
First we consider an embedding of $\LL$ in $\Lambda_{K3}$ that is isometric to the embedding of  $\mathrm{H}^2\left(X',\mathbb{Z}\right)^{M_{20}}$ in $\mathrm{H}^2\left(X',\mathbb{Z}\right)$ for some K3 surface $X'$ admitting an action of the group $M_{20}$. 

We take an element $L\in\LL$ such that $L^2=4p$ as above and a line $\mathbb{C}\omega\subset\LL\otimes_{\mathbb{Z}}\mathbb{C}$ that is orthogonal to $L$ and such that $\omega\cdot\omega=0$ and $\omega\cdot\bar{\omega}>0$. In this way $\omega$ can be viewed as an element of the period domain $\Omega_{\Lambda_{K3}}$. By the surjectivity of the period map of K3 surfaces (see for example \cite[Chapter 7]{Kondobook}), there exists a K3 surface $X$ and an ample line bundle $L_X$ on $X$ such that the isometry $\mathrm{H}^2(X,\mathbb{Z})$ allows us to identify $L_X$ to $L$ and the 1-dimensional subset $\mathrm{H}^0\left(X,\Omega^2_X\right)\subset\mathrm{H}^2(X,\mathbb{C})$ to $\mathbb{C}\omega$.

We can now apply \cite[Lemma 8.24]{Kondobook} to prove that $M_{20}$ acts on $\mathrm{H}^2(X,\mathbb{Z})$. Condition (1) of \cite[Lemma 8.24]{Kondobook} states that the group $M_{20}$ has to preserve $\mathrm{H}^0\left(X,\Omega^2_X\right)$ and this the case by construction. For condition (2)  the class $L_X$ 
clearly satisfies the conditions. 
Finally, condition (3) is also satisfied since $\LL^{\perp}$ contains no element of norm $-2$ by \cite[Lemma 4.2]{Nikulin}. As a consequence the Mathieu group $M_{20}$ acts on $X$ with $L_X\in\mathrm{H}^2(X,\mathbb{Z})^{M_{20}}$ as desired.
\end{proof}

\newpage
\appendix
\section{Table of $T_X$}\label{tab:T_X}

We recall the notations of Table \ref{tab_complete}. Let $X$ be a K3 surface admitting a faithful and symplectic action of the Mathieu group $M_{20}$ and let $T_X$ be its transcendental lattice, with $T_X=\left(\begin{array}{cc}
     4a & 2b  \\
     2b & 4c 
\end{array}\right)$ and $a,b,c$ as in Section \ref{sec-background}, satisfying \eqref{adm}. We denote by $L\in\mathrm{NS}(X)$ a $M_{20}$-invariant ample class such that $L^2=4n$, with $n\in\mathbb{Z}_{\geq 1}$ and by $Q_{L^2}$ the number of quadrics that describe the projective model, according to Proposition \ref{number}. 
Finally, $I=\sqrt{\frac{160n}{4ac-b^2}}$ is the index of $\mathbb{Z}L\oplus\LL^{\perp}$ in $\mathrm{NS}(X)$.
When $n=1,2,10$, the surfaces are described in Proposition \ref{3K3}.

 \small
    \begin{longtable}[c]{| c| c | c | c|cc c| c | c| }
    \caption{Table with all cases  \label{tab_complete}}\\
    \hline
    $n$ & $L^2$ & $Q_{L^2}$ &$T_X$ & $a$ & $b$ & $c$ & $\langle 4n\rangle\hookrightarrow\LL$ &  $I=\sqrt{\frac{160n}{4ac-b^2}}$\\
        \hline

    \hline
    \endfirsthead
        \hline

    $n$ & $L^2$ & $Q_{L^2}$ &$T_X$ & $a$ & $b$ & $c$ & $\langle 4n\rangle\hookrightarrow\LL$ &
    $I=\sqrt{\frac{160n}{4ac-b^2}}$\\
        \hline
    \hline
     \endhead
    \hline
    1 & 4   &-& $\left(\begin{array}{cc} 4 & 0 \\ 0 & 40 \end{array}\right)$ & 1 & 0 & 10 & $L\mapsto e$ &  2 \\
    \hline
    2 & 8   &3& $\left(\begin{array}{cc} 8 & 4 \\ 4 & 12 \end{array}\right)$ & 2 & 2 & 3 & $L\mapsto e+f$ & 4 \\
    \hline
    3 & 12  &10& $\left(\begin{array}{cc} 8 & 0 \\ 0 & 60 \end{array}\right)$ & 2 & 0 & 15 & $L\mapsto h$ & 2 \\
    \hline
    4 & 16 &21 & $\left(\begin{array}{cc} 4 & 0 \\ 0 & 40 \end{array}\right)$ & 1 & 0 & 10 & $L\mapsto 2e$ & 4 \\
    \hline
    
    \multirow{2}{*}{5} &  \multirow{2}{*}{20}  & \multirow{2}{*}{36} & \multirow{2}{*}{$\left(\begin{array}{cc} 20 & 0 \\ 0 & 40 \end{array}\right)$ }&  \multirow{2}{*}{5} &  \multirow{2}{*}{0} &  \multirow{2}{*}{10} & $L\mapsto e+2f$ &    \multirow{2}{*}{2} \\
    & & && & & & $L\mapsto f-h$ & \\
    \hline
    6 & 24  & - & - & - & - & - & - & - \\
    \hline
    
    7 & 28 &80 & $\left(\begin{array}{cc} 8 & 0 \\ 0 & 140 \end{array}\right)$ & 2 & 0 & 35 & $L\mapsto e+f-h$ &  4 \\
    \hline
    8 & 32&105  & $\left(\begin{array}{cc} 8 & 4 \\ 4 & 12 \end{array}\right)$ & 2 & 2 & 3 & $L\mapsto 2e+2f$ &  8 \\
    \hline
     \multirow{3}{*}{9} &  \multirow{3}{*}{36}  &\multirow{3}{*}{136} & $\left(\begin{array}{cc} 4 & 0 \\ 0 & 40 \end{array}\right)$ & 1 & 0 & 10 & $L\mapsto 3e$ &   6 \\
    & && $\left(\begin{array}{cc} 36 & 12 \\ 12 & 44 \end{array}\right)$ & 9 & 6 & 11 & $L\mapsto 3e+h$ & 2\\
    \hline
     \multirow{3}{*}{10} &  \multirow{3}{*}{40}  &\multirow{3}{*}{171} & $\left(\begin{array}{cc} 4 & 0 \\ 0 & 4 \end{array}\right)$ & 1 & 0 & 1 & $L\mapsto e+f+2h$ &   20 \\
    & & & $\left(\begin{array}{cc} 20 & 0 \\ 0 & 20 \end{array}\right)$ & 5 & 0 & 5 & $L\mapsto e+3f$ &  4\\
    \hline
     \multirow{3}{*}{15} &  \multirow{3}{*}{60}  &\multirow{3}{*}{406} & $\left(\begin{array}{cc} 8 & 0 \\ 0 & 12 \end{array}\right)$ & 2 & 0 & 3 & $L\mapsto 2e+2f-h$ &   5 \\
    & & & $\left(\begin{array}{cc} 20 & 0 \\ 0 & 120 \end{array}\right)$ & 5 & 0 & 25 & $L\mapsto e-2h$ & 1 \\
    \hline
      \multirow{3}{*}{18} &  \multirow{3}{*}{72}  &\multirow{3}{*}{595} & $\left(\begin{array}{cc} 8 & 4 \\ 4 & 12 \end{array}\right)$ & 2 & 2 & 3 & $L\mapsto 3e+3f$ &   12 \\
    & & & $\left(\begin{array}{cc} 8 & 4 \\ 4 & 92 \end{array}\right)$ & 2 & 2 & 23 & $L\mapsto 3e+3f+2h$ & 4 \\

    \hline
    
    \multirow{2}{*}{20} &  \multirow{2}{*}{80}  & \multirow{2}{*}{741} & \multirow{2}{*}{$\left(\begin{array}{cc} 20 & 0 \\ 0 & 40 \end{array}\right)$ }&  \multirow{2}{*}{5} &  \multirow{2}{*}{0} &  \multirow{2}{*}{10} & $L\mapsto 2e+4f$ &    \multirow{2}{*}{4} \\
    & & && & & & $L\mapsto 2f-2h$ & \\
    
    \hline
    30 & 120  &1711 &  $\left(\begin{array}{cc} 20 & 10 \\ 10 & 20 \end{array}\right)$ & 5 & 5 & 5 & $L\mapsto 3e+f-2h$ & 8 \\
    \hline
     \multirow{3}{*}{40} &  \multirow{3}{*}{160}  &\multirow{3}{*}{3081} & $\left(\begin{array}{cc} 4 & 0 \\ 0 & 4 \end{array}\right)$ & 1 & 0 & 1 & $L\mapsto 2e+2f+4h$ &   40 \\
    & & & $\left(\begin{array}{cc} 20 & 0 \\ 0 & 20 \end{array}\right)$ & 5 & 0 & 5 & $L\mapsto 2e+6f$ &  8\\
    \hline
    
     \multirow{4}{*}{45} &  \multirow{4}{*}{180} &\multirow{4}{*}{3916} & \multirow{2}{*}{$\left(\begin{array}{cc} 20 & 0 \\ 0 & 40 \end{array}\right)$}
     & \multirow{2}{*}{5}& \multirow{2}{*}{0}&\multirow{2}{*}{10} & $L\mapsto 3f-3h$ &  \multirow{2}{*}{6}\\
     & &  &  & &  &  & $L\mapsto 3e+6f$ &   \\
   
    & &&   \multirow{2}{*}{$\left(\begin{array}{cc} 20 & 0 \\ 0 & 360 \end{array}\right)$} &  \multirow{2}{*}{5 }&  \multirow{2}{*}{0} &  \multirow{2}{*}{90} & $L\mapsto 4e+6f+h$ &    \multirow{2}{*}{2} \\
    & & & & & & & $L\mapsto 3e+4f+4h$ & \\
    \hline

    \multirow{8}{*}{90} &   \multirow{8}{*}{360} &   \multirow{8}{*}{15931}& $\left(\begin{array}{cc} 20 & 0 \\ 0 & 20 \end{array}\right)$ & 5 & 0 & 5 & $L\mapsto 3e+9f$  & 12 \\
     & & & $\left(\begin{array}{cc} 4 & 0 \\ 0 & 4 \end{array}\right)$ & 1 & 0 & 1 & $L\mapsto 3e+3f+6h$ &  60 \\
    & & & $\left(\begin{array}{cc} 20 & 0 \\ 0 & 180 \end{array}\right)$ & 5 & 0 & 45 & $L\mapsto 3e+7f-2h$ &  4 \\
    & & & $\left(\begin{array}{cc} 8 & 4 \\ 4 & 20 \end{array}\right)$ & 2 & 2 & 5 & $L\mapsto 3e+3f-4h$ &  20 \\
    \hline
\end{longtable}

\enlargethispage{2cm}
\section{Quadrics for $X\subset\mathbb{P}^7$}\label{app-A}
\normalsize The surface $X$ of Section \ref{L^2=12} is described as the intersection of 10 quadrics in $\mathbb P^7$. We detail now the equations of the quadrics $F_i(x_1,\ldots,x_8)$, with $i\in\{1,\ldots,10\}$, obtained by computations by MAGMA using a program by C. Bonnaf\'e. In what follows, $a$ is a primitive root of unity of order 20. 

{\tiny
\begin{equation*}
\begin{split}F_1:=\textstyle
\frac1{15}&(-736a^7 + 528a^6 - 352a^5 - 528a^4 - 736a^3 - 304)x_1x_7 + 
        \textstyle \frac1{15}(736a^7 + 272a^6 + 192a^5 - 272a^4 + 736a^3 - 576)x_1x_8+\\
        \textstyle\frac1{15}&(64a^7 + 208a^6 + 368a^5 - 208a^4 + 64a^3 - 64)x_2x_7 + 
        \textstyle\frac1{15}(-16a^7 - 192a^6 - 112a^5 + 192a^4 - 16a^3 + 416)x_2x_8 +\\
        \textstyle\frac1{15}&(-176a^7 - 32a^6 - 112a^5 + 32a^4 - 176a^3 + 176)x_3x_5 +
        \textstyle\frac15(-32a^7 + 16a^6 - 64a^5 - 16a^4 - 32a^3 - 128)x_3x_6 + \\
        \textstyle\frac15&(48a^7 + 96a^6 + 96a^5 - 96a^4 + 48a^3 - 128)x_4x_5 + 
        \textstyle\frac15(48a^7 - 64a^6 + 16a^5 + 64a^4 + 48a^3 + 352)x_4x_6,
 \end{split}\end{equation*}
    
\begin{equation*}
\begin{split}F_2:=&(-224a^7 - 144a^6 - 160a^5 + 144a^4 - 224a^3 +              256)x_1x_7 + (32a^7 + 
        240a^6 + 16a^5 - 240a^4 + 32a^3 - 448)x_1x_8 +\\
        &(16a^7 + 96a^6 + 
        16a^5 - 96a^4 + 16a^3 - 128)x_2x_7 + (64a^7 - 80a^6 + 32a^5 + 
        80a^4 + 64a^3 + 112)x_2x_8 +\\& (-16a^7 - 64a^6 - 16a^5 + 64a^4 - 
        16a^3 + 128)x_3x_5 + (-96a^7 - 16a^6 - 48a^5 + 16a^4 - 
        96a^3)x_3x_6 +\\& (-32a^7 + 112a^6 - 112a^4 - 32a^3 - 192)x_4x_5 + 
        (192a^7 - 48a^6 + 96a^5 + 48a^4 + 192a^3 + 144)x_4x_6, \end{split}\end{equation*}

\begin{equation*}
    \begin{split}F_3:=&
    (-96a^7 - 16a^6 - 48a^5 + 16a^4 - 96a^3 + 32)x_1x_5 + \textstyle\frac13(-32a^7 + 
        144a^6 - 80a^5 - 144a^4 - 32a^3 - 176)x_1x_6 + \\&\textstyle\frac13(32a^7 + 112a^6
        - 112a^4 + 32a^3 - 240)x_2x_5 + \textstyle\frac13(80a^7 + 32a^6 + 64a^5 - 32a^4
        + 80a^3 + 16)x_2x_6 +\\& (-16a^7 - 16a^3)x_3x_7 + \textstyle\frac13(32a^7 + 48a^6 +
        80a^5 - 48a^4 + 32a^3 - 16)x_3x_8 + (-16a^7 - 16a^5 - 16a^3 + 
        16)x_4x_7 +\\& (16a^7 + 32a^6 - 32a^4 + 16a^3 + 16)x_4x_8,\end{split}\end{equation*}
        
\begin{equation*}
    \begin{split}F_4:=&
    \textstyle\dfrac13(-112a^7 - 256a^6 - 32a^5 + 256a^4 - 112a^3 + 400)x_1x_5 + 
        \textstyle\frac13(-144a^7 + 64a^6 - 32a^5 - 64a^4 - 144a^3 - 32)x_1x_6 +\\& 
        \textstyle\frac13(-96a^7 + 112a^6 - 80a^5 - 112a^4 - 96a^3 - 176)x_2x_5 + 
        \textstyle\frac13(48a^7 + 32a^6 + 80a^5 - 32a^4 + 48a^3 - 112)x_2x_6 + \\&
        \textstyle\frac13(-16a^6 - 16a^5 + 16a^4 + 80)x_3x_7 + \textstyle\frac13(16a^7 + 32a^6 - 
        32a^4 + 16a^3 - 96)x_3x_8 + (-16a^7 - 32a^6 + 32a^4 - 16a^3 + 
        80)x_4x_7 +\\& (-16a^7 + 32a^6 + 16a^5 - 32a^4 - 16a^3 - 48)x_4x_8,\end{split}\end{equation*}
        
\begin{equation*}
    \begin{split}F_5:=&
        (32a^7 - 160a^6 + 160a^4 + 32a^3 + 192)x_1x_4 + 32x_2x_3 + (-32a^7 + 
        32a^6 - 64a^5 - 32a^4 - 32a^3 - 32)x_2x_4 + \\&(-64a^6 - 16a^5 + 
        64a^4 + 128)x_5^2 + (-96a^7 + 32a^6 - 64a^5 - 32a^4 - 96a^3 - 
        64)x_5x_6 + (64a^6 + 16a^5 - 64a^4 - 32)x_6^2 +\\& (32a^7 - 32a^6 + 
        16a^5 + 32a^4 + 32a^3 + 32)x_7^2 + (-96a^7 - 32a^6 - 64a^5 + 
        32a^4 - 96a^3)x_7x_8 +\\& (32a^7 + 32a^6 + 48a^5 - 32a^4 + 32a^3 - 
        64)x_8^2,\end{split}\end{equation*}
        
\begin{equation*}
    \begin{split}F_6:=&
    32x_1x_3 + (32a^7 - 32a^6 + 64a^5 + 32a^4 + 32a^3 + 32)x_1x_4 + 
        (-32a^7 - 32a^6 + 32a^4 - 32a^3)x_2x_4 +\\&  (16a^7 - 48a^6 + 48a^4 +
        16a^3 + 64)x_5^2 +(-64a^7 - 32a^5 - 64a^3)x_5x_6 + (16a^7 + 16a^6
        - 32a^5 - 16a^4 + 16a^3 - 32)x_6^2 +\\& (16a^7 + 16a^6 - 16a^4 + 
        16a^3)x_7^2 + (-32a^5 + 64)x_7x_8 + (16a^7 + 16a^6 - 32a^5 - 16a^4
        + 16a^3 - 32)x_8^2,
        \end{split}\end{equation*}

\begin{equation*}
    \begin{split}F_7:=&   \textstyle\frac15(-304a^7 + 352a^6 - 128a^5 - 352a^4 - 304a^3 - 496)x_1^2 + (64a^7 
        + 32a^5 + 64a^3 + 32)x_1x_2 + (-16a^7 - 16a^3 + 16)x_2^2 + \\&
        \textstyle\frac15(-48a^7 - 96a^6 - 96a^5 + 96a^4 - 48a^3 + 48)x_3^2 + 
        \textstyle\frac15(-656a^7 + 128a^6 + 128a^5 - 128a^4 - 656a^3 + 16)x_4^2 + \\&
        \textstyle\frac15(608a^7 - 64a^6 + 256a^5 + 64a^4 + 608a^3 + 672)x_5x_7 + 
        \textstyle\frac15(-384a^7 - 288a^6 - 448a^5 + 288a^4 - 384a^3 + 384)x_5x_8 + \\&
        \textstyle\frac15(-192a^7 - 544a^6 - 64a^5 + 544a^4 - 192a^3 + 512)x_6x_7 + 
        \textstyle\frac15(-224a^7 + 192a^6 + 192a^5 - 192a^4 - 224a^3 - 736)x_6x_8,\end{split}\end{equation*}
        
\begin{equation*}\begin{split}
    F_8:=&\textstyle\frac15(-256a^7 + 368a^6 - 32a^5 - 368a^4 - 256a^3 - 704)x_1^2 + 
        \textstyle\frac15(288a^7 - 224a^6 + 96a^5 + 224a^4 + 288a^3 + 192)x_1x_2 + \\&
        \textstyle\frac15(-32a^7 + 16a^6 - 64a^5 - 16a^4 - 32a^3 + 32)x_2^2 + 
        \textstyle\frac15(-128a^7 - 16a^6 + 64a^5 + 16a^4 - 128a^3 - 32)x_3^2 + \\&
        \textstyle\frac15(32a^7 - 96a^6 - 256a^5 + 96a^4 + 32a^3 - 32)x_3x_4 + 
        \textstyle\frac15(-224a^7 + 272a^6 - 448a^5 - 272a^4 - 224a^3 - 416)x_4^2 +\\& 
        \textstyle\frac15(352a^7 - 576a^6 + 224a^5 + 576a^4 + 352a^3 + 768)x_5x_7 + 
        \textstyle\frac15(-608a^7 + 64a^6 - 256a^5 - 64a^4 - 608a^3 + 128)x_5x_8 + \\&
        \textstyle\frac15(-448a^7 - 96a^6 - 96a^5 + 96a^4 - 448a^3 + 448)x_6x_7 + 
        \textstyle\frac15(192a^7 + 544a^6 + 64a^5 - 544a^4 + 192a^3 - 512)x_6x_8,\end{split}\end{equation*} 
        
\begin{equation*}\begin{split}F_9:=
&    \textstyle\frac19(-160a^7 - 352a^5 - 160a^3 - 16)x_1^2 + (32a^7 + 32a^6 - 32a^4 + 
        32a^3 - 32)x_1x_2 + 16x_2^2 + \textstyle\frac19(-32a^6 - 32a^5 + 32a^4 + 16)x_3^2
        + 80x_4^2 +\\& \textstyle\frac19(128a^7 - 64a^6 - 128a^5 + 64a^4 + 128a^3 + 
        160)x_5x_7 + \textstyle\frac19(-32a^7 + 96a^6 - 32a^5 - 96a^4 - 32a^3 - 
        224)x_5x_8 + \\&\textstyle\frac19(-32a^7 + 32a^6 - 96a^5 - 32a^4 - 32a^3 + 
        96)x_6x_7 + \textstyle\frac19(128a^7 - 64a^5 + 128a^3 - 160)x_6x_8,\end{split}\end{equation*} 
        
\begin{equation*}\begin{split}F_{10}:=
&    (-96a^7 + 96a^6 - 64a^5 - 96a^4 - 96a^3 - 144)x_1^2 + (96a^7 - 32a^6 
        + 64a^5 + 32a^4 + 96a^3 - 32)x_1x_2 +\\& (-32a^6 - 32a^5 + 32a^4 + 
        16)x_2^2 + 80x_3^2 + (-32a^6 + 32a^5 + 32a^4 + 16)x_4^2 - 32x_5x_7 + 
        (32a^7 - 32a^6 - 32a^5 + 32a^4 + 32a^3 + 32)x_5x_8 +\\& (32a^7 + 
        32a^6 + 32a^5 - 32a^4 + 32a^3 - 32)x_6x_7 + (-64a^6 - 64a^5 + 
        64a^4 + 32)x_6x_8\end{split}\end{equation*}}

\bibliographystyle{plain}
\bibliography{references}

\end{document}